\author{Rachel Barber}
\address{Rachel Barber, Department of Mathematics, Hood College\\  401 Rosemont Ave, Frederick, MD 21701, USA}
\email{rbarber@hood.edu}
\author{Ted Dobson}
\address{Ted Dobson, University of Primorska, UP IAM, Muzejski trg 2, SI-6000 Koper, Slovenia and\\ University of Primorska, UP FAMNIT, Glagolja\v{s}ka 8, SI-6000 Koper, Slovenia}
\email{ted.dobson@upr.si}
\thanks{This work is supported in part by the Slovenian Research Agency (research program P1-0285 and research projects N1-0140, N1-0160, J1-2451, N1-0208, J1-3001, J1-3003, J1-4008, and J1-50000).}
\tikzstyle{V}=[draw, fill =black, circle, inner sep=0pt, minimum size=4pt]
\theoremstyle{plain}
\numberwithin{equation}{section}
\newtheorem{theorem}{Theorem}[section]
\newtheorem{corollary}[theorem]{Corollary}
\newtheorem{lemma}[theorem]{Lemma}
\theoremstyle{definition}
\newtheorem*{solution*}{Solution}
\newtheorem{definition}[theorem]{Definition}
\newtheorem{problem}[theorem]{Problem}
\DeclareMathOperator{\B}{\mathcal{B}}
\def\fix{{\rm fix}}
\def\Cay{{\rm Cay}}
\def\Aut{{\rm Aut}}
\def\tl{\triangleleft}
\def\la{{\langle}}
\def\ra{{\rangle}}
\def\Z{{\mathbb Z}}
\def\cal{\mathcal}
\def\Stab{{\rm Stab}}
\def\Reducible{{\rm Reducible}}
\def\core{{\rm core}}
\def\Cos{{\rm Cos}}
\begin{document}
\title{Finding automorphism groups of double coset graphs and Cayley graphs are equivalent}

\begin{abstract}
It has long been known that a vertex-transitive graph $\Gamma$ is isomorphic to a double coset graph $\Cos(G,H,S)$ of a transitive group $G\le\Aut(\Gamma)$, a vertex stabilizer $H\le G$, and some subset $S\subseteq G$.  We show that the automorphism group of the Cayley graph $\Cay(G,S)$ with connection set $S$ can be obtained from the automorphism group of $\Cos(G,H,S)$ and vice versa.  We also show that the isomorphism problem for double coset graphs is equivalent to the isomorphism problem for Cayley graphs provided one knows all groups $G$ for which a fixed Cayley graph is a Cayley graph of $G$.  Our main tool is a ``recognition theorem", which recognizes when a Cayley graph of a group $G$ is a wreath product of two graphs based upon its connection set.
\end{abstract}

\maketitle

\section{Introduction}

The problem of determining the automorphism group of a vertex-transitive graph is perhaps the most important and central problem in algebraic graph theory.  In this paper, we will show in Corollary \ref{main cor} that the problem of determining the automorphism group of a double coset graph $\Cos(G,H,S)$ is equivalent to the problem of determining the automorphism group of its corresponding Cayley graph, $\Cay(G,S)$.  As every vertex-transitive graph is isomorphic to a double coset graph \cite[Theorem 2]{Sabidussi1964}, the seemingly larger problem of determining automorphism groups of vertex-transitive graphs is equivalent to the problem of finding the automorphism groups of Cayley graphs (assuming one knows a transitive subgroup of the graphs' automorphism group).  We also show a similar result concerning isomorphisms between vertex-transitive graphs and Cayley graphs, but we must also know all groups $G$ for which the Cayley graph is Cayley graph of $G$.  All results in this paper are true not just for graphs, but also digraphs.

The fundamental tool that we will use is a ``recognition theorem" for double coset graphs that can be written as a wreath product.  That is, in Theorem \ref{cosetwr} we find necessary and sufficient conditions on the connection set $S$ of a double coset graph of $G$ to be isomorphic to a wreath product of two smaller graphs. It is known that a Cayley digraph $\Cay(A,S)$ of an abelian group $A$ is isomorphic to a wreath product of two smaller digraphs if and only if there exists $1 < B < A$ such that $S \setminus B$ is a union of cosets of $B$.  This was shown explicitly for prime powers in \cite{KovacsS2012, Morris1999} and mentioned without proof in \cite{BhoumikDM2014}.  Theorem \ref{cosetwr} can be used to show a bijective correspondence between the sets of double coset graphs of $G$ with a natural property and reducible Cayley graphs of $G$ (Corollary \ref{bijection}).  This correspondence gives a more ``graph theoretic" and intuitive definition (Definition \ref{coset alt definition}) of a double coset graph of $G$ as a quotient of a Cayley graph of $G$, and shows that double coset graphs can be interpreted as nothing more than devices for succinctly storing the symmetry information of some Cayley graphs of $G$.

To finish, we are able to extend the definition of generalized wreath product digraphs to all double coset digraphs.  Generalized wreath product digraphs are a relatively new but very important family of digraphs from the point of view of computing automorphisms groups, and previously were only defined for Cayley digraphs of abelian groups precisely because the recognition problem of wreath products was only solved for Cayley digraphs of abelian groups.  This last problem was the original motivation for this work.

\section{Digraphs as wreath products}

In this section, we collect the basic definitions and results we will need.  For most, additional information as well as illustrative examples can be found in \cite{Book}.

\begin{definition}
Let $\Gamma_1$ and $\Gamma_2$ be digraphs. The \textbf{wreath product of $\Gamma_1$ and $\Gamma_2$}, denoted $\Gamma_1\wr\Gamma_2$,  is the digraph with vertex set $V(\Gamma_1)\times V(\Gamma_2)$ and arc set
$$\{ ((u,v)(u,v')):u\in V(\Gamma_1){\rm\ and\ } (v,v')\in A(\Gamma_2)\}\cup\{((u,v)(u',v')): (u,u')\in A(\Gamma_1){\rm\ and\ }v,v'\in V(\Gamma_2)\}.$$
\end{definition}

The wreath product is sometimes referred to  as the lexicographic product, graph composition, or the $\Gamma_2$-extension of $\Gamma_1$.

\begin{definition}
Let $X$ be a set, $G \leq S_X$ be a transitive group, and $B \subseteq X$. We call $B$ a \textbf{block} of $G$ if whenever $g \in G$, then $g(B) \cap B = \emptyset$ or $B$. If $B = \{x\}$ for some $x \in X$ or $B = X$, then $B$ is a \textbf{trivial block.}
\end{definition}

Note that if $B$ is a block of $G$, then so is $g(B)$ for every $g \in G$, and is called a \textbf{conjugate block of $B$}. The set of all blocks conjugate to $B$, denoted $\mathcal{B}$, is a partition of $X$, and is called a \textbf{block system of $G$}.  If $\B$ is the set orbits of a normal subgroup of $G$, it is called a \textbf{normal} block system of $G$.

\begin{definition}
Let $G\le S_X$ and $H\le S_Y$.  Define the {\bf wreath product of $G$ and $H$}, denoted $G\wr H$, to be the set of all permutations of $X\times Y$ of the form $(x,y)\mapsto (g(x),h_x(y))$, where $g\in G$ and each $h_x\in H$.
\end{definition}

It is straightforward to show that $G\wr H$ is a group.  We will have need of a special block system of the wreath product of two transitive permutation groups.

\begin{definition}
Let $G \leq S_X$ and $H \leq S_Y$ be transitive groups. The \textbf{lexi-partition} of $G\wr H$ with respect to $H$ is the block system $\B = \{\{(x,y) : y \in Y\} : x \in X\}$.
\end{definition}

A transitive permutation group with a block system has an induced action on the block system.

\begin{definition}
Suppose that $G \leq S_n$ is a transitive group with a block system $\B$. Then $G$ has an \textbf{induced action on $\B$}, which we denote by $G/\B$. Namely, for $g \in G$, we define $g/\B(B) = B^\prime$ if and only if $g(B) = B^\prime$, and set
$G/\B = {\{g/\B : g \in G\}}$. We also define the \textbf{fixer of $\B$ in $G$}, denoted $\text{fix}_G(\B)$, to be $\{g \in G : g/\B = 1\}$.
\end{definition}

So $\fix_G({\cal B})$ is the kernel of the induced action of $G$ on ${\cal B}$.

\begin{definition}\label{block quotient defin}
\begin{sloppypar}
Let $\Gamma$ be a vertex-transitive digraph whose automorphism group contains a transitive subgroup $G$ with a block system $\B$. Define the \textbf{block quotient digraph of $\Gamma$ with respect to $\B$}, denoted $\Gamma/\B$, to be the digraph with vertex set $\B$ and arc set $A(\Gamma/\B) = {\{(B, B^\prime) : B \not= B^\prime \in \B \text{ and } (u,v) \in A(\Gamma), u \in B, v \in B^\prime\}.}$
\end{sloppypar}
\end{definition}

So in a block quotient digraph, a block $B$ is out-adjacent to a block $B'$ if and only if some vertex of $B$ is out-adjacent to some vertex of $B'$.

The following result gives necessary and sufficient conditions to recognize that a vertex-transitive digraph is a wreath product (but not from its connection set). The first version of this result is by Joseph \cite[Lemma 3.11]{Joseph1995}. The following is a generalization of her result, whose very similar proof can be found in \cite[Theorem 4.2.15]{Book}.

\begin{lemma}\label{wrthm}
Let $\Gamma$ be a vertex-transitive digraph whose automorphism group contains a transitive subgroup $G$ that has a block system $\B$. Then $\Gamma \cong \Gamma/\B \wr \Gamma[B_0]$, $B_0 \in \B$, if and only if whenever $B, B^\prime \in \B$ are distinct then there is an arc $(x, x^\prime)$ from a vertex $x \in B$ to a vertex $x^\prime \in B^\prime$ if and only if every arc of the form $(x, x^\prime)$ with $x \in B$ and $x^\prime \in B^\prime$ is contained in $A(\Gamma).$
\end{lemma}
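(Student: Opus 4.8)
The plan is to prove both implications by splitting the arcs of $\Gamma$ according to whether their endpoints lie in the same block of $\B$ or in two different blocks. Write $\B = \{B_0, B_1, \dots, B_{m-1}\}$; since $G$ is transitive and the $B_i$ are blocks, all blocks have a common size $n$, and for each $i$ there is some $g_i \in G$ with $g_i(B_0) = B_i$ (take $g_0 = 1$). As each $g_i$ is an automorphism of $\Gamma$, it restricts to an isomorphism $\Gamma[B_0] \to \Gamma[B_i]$, so in particular every $\Gamma[B_i]$ is isomorphic to $\Gamma[B_0]$ and they all have $|A(\Gamma[B_0])|$ arcs.

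For the forward direction, assume the arc condition holds and define $\phi \colon V(\Gamma) \to V(\Gamma/\B) \times V(\Gamma[B_0])$ by $\phi(x) = (B_i, g_i^{-1}(x))$, where $B_i$ is the unique block containing $x$. Since each $g_i^{-1}$ is a bijection $B_i \to B_0$, the map $\phi$ is a bijection, so it remains to check that $\phi$ and $\phi^{-1}$ send arcs to arcs, which I would do in two cases. If $x, x'$ lie in the same block $B_i$, then $(x,x') \in A(\Gamma)$ iff $(x,x') \in A(\Gamma[B_i])$ iff $(g_i^{-1}(x), g_i^{-1}(x')) \in A(\Gamma[B_0])$ (using that $g_i$ restricts to an isomorphism $\Gamma[B_0]\to\Gamma[B_i]$), and by the definition of the wreath product this is exactly the condition for $(\phi(x),\phi(x'))$ to be an arc of $\Gamma/\B \wr \Gamma[B_0]$; note the choice of $g_i$ is irrelevant here. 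If $x \in B_i$ and $x' \in B_j$ with $i \neq j$, then $(\phi(x),\phi(x'))$ is an arc of $\Gamma/\B \wr \Gamma[B_0]$ iff $(B_i,B_j) \in A(\Gamma/\B)$, i.e.\ (by Definition~\ref{block quotient defin}) iff some arc of $\Gamma$ runs from a vertex of $B_i$ to a vertex of $B_j$; by the arc condition this holds iff every such arc lies in $A(\Gamma)$, and in particular iff $(x,x') \in A(\Gamma)$. Hence $\phi$ is an isomorphism.

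For the converse, assume $\Gamma \cong \Gamma/\B \wr \Gamma[B_0]$ and argue by counting arcs. Split $A(\Gamma)$ into arcs internal to a single block and arcs between two distinct blocks; by the first paragraph there are exactly $m\,|A(\Gamma[B_0])|$ of the former. For distinct $i,j$ let $a_{ij}$ be the number of arcs of $\Gamma$ from $B_i$ to $B_j$, so $0 \le a_{ij} \le n^2$, with $a_{ij} \ge 1$ precisely when $(B_i,B_j) \in A(\Gamma/\B)$; thus the number of arcs between distinct blocks is $\sum_{i\neq j} a_{ij} = \sum_{(B_i,B_j)\in A(\Gamma/\B)} a_{ij} \le |A(\Gamma/\B)|\,n^2$, with equality exactly when each $a_{ij}$ is $0$ or $n^2$, which is precisely the arc condition. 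On the other hand $\Gamma/\B \wr \Gamma[B_0]$ has $m\,|A(\Gamma[B_0])|$ internal arcs (one copy of $\Gamma[B_0]$ above each of the $m$ vertices of $\Gamma/\B$) and $|A(\Gamma/\B)|\,n^2$ arcs between distinct fibres. Since isomorphic digraphs have equally many arcs, we get $\sum_{i\neq j} a_{ij} = |A(\Gamma/\B)|\,n^2$, which forces equality in the bound and hence the arc condition.

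I expect the only real subtlety to be bookkeeping in the forward direction: being careful that the $g_i$ are genuine automorphisms so that the within-block comparison is independent of the choice, and identifying the phrase ``there is an arc from $B$ to $B'$'' in the statement with ``$(B,B') \in A(\Gamma/\B)$''. The converse is a pure counting argument and should be routine; the hypothesis that $G$ is transitive on $\B$ is used only to guarantee the blocks are equicardinal and the induced subdigraphs $\Gamma[B_i]$ mutually isomorphic.
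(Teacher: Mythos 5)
Your proof is correct, and it is worth noting that the paper does not actually reprove this lemma: it quotes it from Joseph and from \cite[Theorem 4.2.15]{Book}, where the argument runs through an explicit isomorphism built from the block structure, essentially as in your forward direction (your map $\phi(x)=(B_i,g_i^{-1}(x))$, using that the $g_i\in G\le\Aut(\Gamma)$ carry $\Gamma[B_0]$ isomorphically onto $\Gamma[B_i]$, is the standard construction). Where you genuinely diverge is the converse. The delicate point there is that the hypothesis only gives an \emph{abstract} isomorphism $\Gamma\cong\Gamma/\B\wr\Gamma[B_0]$, which need not carry $\B$ to the lexi-partition of the wreath product, so one cannot simply ``read off'' the arc condition through the isomorphism; the usual treatments have to address this. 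Your arc-counting argument sidesteps the issue entirely: the internal arc count $m\,|A(\Gamma[B_0])|$ and the bound $\sum_{i\ne j}a_{ij}\le |A(\Gamma/\B)|\,n^2$, with equality exactly at the arc condition, are computed inside $\Gamma$ itself, and the isomorphism is used only through the invariant $|A(\Gamma)|=|A(\Gamma/\B\wr\Gamma[B_0])|$. This is more elementary and quite clean, at the (harmless here) cost of using finiteness of $\Gamma$, which the counting manifestly requires and which is the standing assumption of the paper; the transitivity of $G$ on $\B$ (blocks conjugate under $G$) is exactly what justifies both the common block size $n$ and the count $m\,|A(\Gamma[B_0])|$, as you note.
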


The following theorem was proven in \cite[Theorem 5.7]{DobsonM2009}, and gives the automorphism group of the wreath product of finite vertex-transitive digraphs.

\begin{theorem}\label{autwr}
Let $\Gamma_1$ and $\Gamma_2$ be finite vertex-transitive digraphs. If $\Gamma = \Gamma_1 \wr \Gamma_2$ and $\Aut(\Gamma) \not= \Aut(\Gamma_1) \wr \Aut(\Gamma_2)$, then there exist positive integers $r >1$ and $s >1$ and vertex-transitive digraphs $\Gamma_1^\prime$ and $\Gamma_2^\prime$ for which $\Gamma_1 \cong \Gamma_1^\prime \wr K_r, \; \Gamma_2 \cong K_s \wr \Gamma_2^\prime$ or $\Gamma_1 \cong \Gamma_1^\prime \wr \bar{K_r}, \; \Gamma_2 \cong \bar{K_s} \wr \Gamma_2^\prime$. In either case, $\Aut(\Gamma) \cong \Aut(\Gamma_1^\prime) \wr S_{rs} \wr \Aut(\Gamma_2^\prime).$
\end{theorem}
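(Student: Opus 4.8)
My plan is to compare $G := \Aut(\Gamma)$, where $\Gamma = \Gamma_1\wr\Gamma_2$, with the block system it ``ought'' to have. Write $V(\Gamma) = V(\Gamma_1)\times V(\Gamma_2)$, let $\B = \{B_u : u\in V(\Gamma_1)\}$ with $B_u = \{u\}\times V(\Gamma_2)$ be the lexi-partition of $\Aut(\Gamma_1)\wr\Aut(\Gamma_2)$ with respect to $\Aut(\Gamma_2)$, and recall that $\Aut(\Gamma_1)\wr\Aut(\Gamma_2)\le G$, that $\B$ is a block system of $\Aut(\Gamma_1)\wr\Aut(\Gamma_2)$, and that $\Gamma/\B\cong\Gamma_1$ and $\Gamma[B_u]\cong\Gamma_2$. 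The argument then splits according to whether or not $\B$ is a block system of $G$.

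First I would prove that if $\B$ \emph{is} a block system of $G$ then $G = \Aut(\Gamma_1)\wr\Aut(\Gamma_2)$; since by hypothesis $G\ne\Aut(\Gamma_1)\wr\Aut(\Gamma_2)$, it follows that $\B$ is \emph{not} a block system of $G$. For this, note that $\fix_G(\B)$ fixes each $B_u$ setwise and lies in $\Aut(\Gamma)$, so its restriction to $B_u$ lies in $\Aut(\Gamma[B_u]) = \Aut(\Gamma_2)$; as $\fix_G(\B)$ also contains the base group $\prod_u\Aut(\Gamma_2)$ of $\Aut(\Gamma_1)\wr\Aut(\Gamma_2)$, we get $\fix_G(\B) = \prod_u\Aut(\Gamma_2)$. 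Similarly $G/\B\le\Aut(\Gamma/\B) = \Aut(\Gamma_1)$ while $G/\B\supseteq\Aut(\Gamma_1)$, so $G/\B = \Aut(\Gamma_1)$. Given $g\in G$, lift $g/\B\in\Aut(\Gamma_1)$ to $\hat g\in\Aut(\Gamma_1)\wr\Aut(\Gamma_2)\le G$ via $(u,y)\mapsto((g/\B)(u),y)$; then $g\hat g^{-1}\in\fix_G(\B)\le\Aut(\Gamma_1)\wr\Aut(\Gamma_2)$, hence $g\in\Aut(\Gamma_1)\wr\Aut(\Gamma_2)$.

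The technical heart of the proof is then a recognition result: \emph{$\B$ is not a block system of $\Aut(\Gamma_1\wr\Gamma_2)$ exactly when $\Gamma_1$ has two distinct non-adjacent vertices with the same out-neighbourhood and the same in-neighbourhood and $\Gamma_2$ is disconnected, or $\Gamma_1$ has such a pair of vertices that are adjacent in both directions and $\overline{\Gamma_2}$ is disconnected.} Using vertex-transitivity and Lemma~\ref{wrthm}, the first alternative is equivalent to $\Gamma_1\cong\Gamma_1'\wr\bar K_r$ and $\Gamma_2\cong\bar K_s\wr\Gamma_2'$ with $r,s>1$, and the second to the same statement with $\bar K$ replaced by $K$. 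To prove the recognition result I would work with the two $\Aut(\Gamma)$-invariant ``twin'' relations on $V(\Gamma)$ — $x,y$ non-adjacent with $N^+(x)\setminus\{x,y\} = N^+(y)\setminus\{x,y\}$ and $N^-(x)\setminus\{x,y\} = N^-(y)\setminus\{x,y\}$, respectively the same relation with $x,y$ adjacent in both directions; a short arc count shows these are equivalence relations, that at most one of them has nontrivial classes, and that a nontrivial class induces $\bar K$ (respectively $K$). I would also use that every $B_u$, and hence every image $g(B_u)$ with $g\in G$, is a \emph{module} of $\Gamma$ (a set all of whose vertices have the same adjacency, in each direction, to each fixed vertex outside it). If $\B$ is not preserved by $G$, some $g(B_u)$ genuinely overlaps some $B_v$, and then standard properties of modules make $X := g(B_u)\cap B_v$ and $B_v\setminus X$ modules of $\Gamma[B_v]\cong\Gamma_2$ that partition $V(\Gamma_2)$; vertex-transitivity excludes the possibility that all arcs between $X$ and $V(\Gamma_2)\setminus X$ run one way, leaving $\Gamma_2$ disconnected or $\overline{\Gamma_2}$ disconnected, while examining how $g(B_u)$ must meet the various $B_v$ and invoking Lemma~\ref{wrthm} produces the matching twin pair in $\Gamma_1$. \emph{Producing the twin pair in $\Gamma_1$ of precisely the right adjacency type is the step I expect to require the most care.}

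To finish, recall that under the hypothesis $\B$ is not a block system of $G$, so the recognition result applies; say we are in the $\bar K$ case, the $K$ case following by replacing $\Gamma$ with $\overline{\Gamma} = \overline{\Gamma_1}\wr\overline{\Gamma_2}$ and using $\Aut(\overline{\Gamma}) = \Aut(\Gamma)$. Choose $r$ and $s$ maximal with $\Gamma_1\cong\Gamma_1'\wr\bar K_r$ and $\Gamma_2\cong\bar K_s\wr\Gamma_2'$; maximality of $r$ forces $\Gamma_1'$ to have no non-adjacent twin pair, and maximality of $s$ forces $\Gamma_2'$ to be connected. Then $\Gamma\cong\Gamma_1'\wr\bar K_{rs}\wr\Gamma_2' = \Gamma_1'\wr(\bar K_{rs}\wr\Gamma_2')$, and applying the recognition result to this decomposition shows that its lexi-partition \emph{is} a block system of $G$: the first alternative fails since $\Gamma_1'$ has no non-adjacent twin pair, and the second fails since $\overline{\bar K_{rs}\wr\Gamma_2'} = K_{rs}\wr\overline{\Gamma_2'}$ is connected. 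By the first part of the proof, $G = \Aut(\Gamma_1')\wr\Aut(\bar K_{rs}\wr\Gamma_2')$; and since $\bar K_{rs}\wr\Gamma_2'$ is the disjoint union of $rs$ copies of the connected digraph $\Gamma_2'$, its automorphism group is $S_{rs}\wr\Aut(\Gamma_2')$. Therefore $\Aut(\Gamma)\cong\Aut(\Gamma_1')\wr S_{rs}\wr\Aut(\Gamma_2')$.
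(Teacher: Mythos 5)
The paper does not prove Theorem~\ref{autwr} at all: it is quoted from \cite{DobsonM2009}, where the vertex-transitive statement is extracted from a longer analysis of $\Aut(\Gamma_1\wr\Gamma_2)$ for general digraphs. Your argument is therefore necessarily a different route, and it is a sensible one: it exploits vertex-transitivity throughout (the reduction to whether the lexi-partition $\B$ is $\Aut(\Gamma)$-invariant, the module analysis inside a block, and the degree count that eliminates the ``one-way'' case are all special to this setting). Your first step (if $\B$ is a block system of $\Aut(\Gamma)$ then $\Aut(\Gamma)=\Aut(\Gamma_1)\wr\Aut(\Gamma_2)$) is correct as written, and so is the closing argument once the recognition statement is granted: maximality of $r,s$, re-applying the recognition result to $\Gamma_1'\wr(\bar K_{rs}\wr\Gamma_2')$, and $\Aut(\bar K_{rs}\wr\Gamma_2')\cong S_{rs}\wr\Aut(\Gamma_2')$ for connected $\Gamma_2'$. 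You should, however, also record why $\Gamma_1'$ and $\Gamma_2'$ can be taken vertex-transitive, as the theorem asserts and as your re-application of the recognition result needs: with $r$ maximal the lexi-classes are exactly the twin classes, so $\Gamma_1'$ is the quotient of $\Gamma_1$ by the relation $R$, and $\Gamma_2'$ is a connected component of $\Gamma_2$.

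The one genuine gap is the step you flag yourself: producing the twin pair in $\Gamma_1$ of the matching type. As written this is an intention, not an argument, and it is the heart of the theorem. It does go through with the machinery you already set up, and quickly. Let $M=g(B_u)$ be an image of a block with $M\notin\B$, and let $B_v$ be a block it meets properly, so $X=M\cap B_v$ is a nonempty proper subset of $B_v$ and $M$ also meets some $B_w$ with $w\ne v$; fix $x\in M\cap B_v$ and $x'\in M\cap B_w$. For any vertex $z$ of $\Gamma_1$ with $z\notin\{v,w\}$ we have $B_z\not\subseteq M$ (since $\vert M\vert=\vert B_z\vert$ and $M\ne B_z$), so choose $c\in B_z\setminus M$; because $M$ is a module, $x$ and $x'$ have identical out- and in-adjacency to $c$, and these adjacencies are exactly the $\Gamma_1$-adjacencies of $v$, respectively $w$, to $z$. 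Hence $v$ and $w$ have the same out- and in-neighbours outside $\{v,w\}$. Taking instead $y\in B_v\setminus X$, which also lies outside $M$, and comparing $x$ with $x'$ shows that $v$ and $w$ are non-adjacent exactly in the case where there are no arcs between $X$ and $B_v\setminus X$ (your ``$\Gamma_2$ disconnected'' case), and adjacent in both directions in the all-arcs case (your ``$\overline{\Gamma_2}$ disconnected'' case), so the pair has the required type automatically; Lemma~\ref{wrthm} and the twin-class argument then give the decompositions with $r,s>1$. With that paragraph supplied your proof is complete; without it, the crucial implication is only asserted.
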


Suppose $\Aut(\Gamma_1\wr\Gamma_2)\not = \Aut(\Gamma_1)\wr\Aut(\Gamma_2)$ and $\Gamma_1\wr\Gamma_2$ is neither complete nor the complement of a complete graph; that is, assume that in the statement of Theorem \ref{autwr}, we have that $\Gamma_1^\prime$ or $\Gamma_2^\prime$ has more than one vertex.  Theorem \ref{autwr} implies that $\Gamma_1\wr\Gamma_2$ can be written as another wreath product of two vertex transitive digraphs whose automorphism group is the wreath product of the automorphism groups. That is, if $\Gamma \cong \Gamma_1 \wr \Gamma_2$ with $\Aut(\Gamma) \not\cong \Aut(\Gamma_1) \wr \Aut(\Gamma_2)$, then there are two nontrivial digraphs $\Gamma_3 , \Gamma_4$ where $\Gamma \cong \Gamma_3 \wr\Gamma_4$ with $\Aut(\Gamma) = \Aut(\Gamma_3)\wr\Aut(\Gamma_4)$.  For example, if $\Gamma_1 \cong \Gamma_1^\prime \wr K_r$ and $\Gamma_2 \cong K_s \wr \Gamma_2^\prime$, then we may take $\Gamma_3 = \Gamma_1^\prime$ and $\Gamma_4 = K_{rs}\wr\Gamma_2^\prime$, while if $\Gamma_1 \cong \Gamma_1^\prime \wr \bar{K_r}$ and $\Gamma_2 \cong \bar{K_s} \wr \Gamma_2^\prime$ then we may take $\Gamma_3 = \Gamma_1^\prime$ and $\Gamma_4 = \bar{K}_{rs}\wr\Gamma_2^\prime$.



We now give the definitions of double coset digraphs and Cayley digraphs, after giving the definition of double cosets.

\begin{definition} Let $G$ be a group and $H, K < G$. For each $g \in G$, the set $HgK = \{hgk : h \in H, k \in K\}$ is called the \textbf{$(H, K)$-double coset of $g$ in $G$}.  If $H = K$, then the $(H, K)$-double cosets are referred to as the \textbf{double cosets of $H$ in $G$}.
\end{definition}

The $(H, K)$-double cosets of $G$ form a partition of $G$ and need not have the same cardinality. Note that the $(\{1\}, K)$-double cosets are the left cosets of $K$ in $G$ and the $(H, \{1\})$-double cosets are just the right cosets of $H$ in $G$. In general, $HgK$ is a union of right cosets of $H$ as well as a union of left cosets of $K$.

Throughout the paper, we denote the set of left cosets of $H$ in $G$ by $G/H$.

\begin{definition}
Let $G$ be a group, $H \leq G$, and $S \subset G$ such that $S\cap H = \emptyset$ and $HSH = S$. Define a digraph $\text{Cos}(G,H,S)$ with vertex set $V(\text{Cos}(G,H,S)) = G/H$ the set of left cosets of $H$ in $G$, and arc set $A(\text{Cos}(G,H,S)) = \{(gH,gsH):g\in G{\rm\ and\ }s\in S\}$. The digraph $\text{Cos}(G,H,S)$ is called the \textbf{double coset digraph of $G$} with \textbf{connection set $S$} (or $HSH$).
\end{definition}

It is customary to impose on $H$ the condition that it is core-free in $G$.  That is, that it contains no nontrivial normal subgroups of $G$.  This ensures that the action of $G$ on the left cosets of $H$ in $G$ is faithful, which is certainly a condition one would want if one were, say, drawing a particular double coset digraph.  We will NOT follow this custom - several of the applications of our main results are actually false if this convention is followed.  The custom also, inconveniently for us, means that abelian groups $A$, for example, have NO double coset digraphs of $A$ that are not Cayley digraphs of $A$ as the only core-free subgroup of an abelian group is the trivial group.

It is also customary to insist that $S\cap H = \emptyset$, as this means that double coset digraphs do not have loops.  In some sense, it does not matter at all whether or not one allows loops.  In fact, the results in the next section are completely independent of this choice, although the condition that $HSH = S$ forces $S\cap H = \emptyset$ or $S\cap H = H$, and so each vertex would have $\vert H\vert$ loops at a vertex (to obtain $r < \vert H\vert$ loops one would insist that $H(S\setminus H)H = S\setminus H$, and then insist that $\vert H\cap S\vert = r$).  The results in Section 4 though, have statements that do depend upon $S\cap H = \emptyset$.  By this, we mean that if one allowed loops, those results are not true ``as is", but it would be a simple matter to modify them appropriately.  We follow this custom for two reasons.  One is the custom (and indeed this condition goes back to Sabidussi's original work on double coset graphs).  The other is that the results in Section 4 are easier to state in this form.

\begin{definition}
Let $G$ be a group and $S \subset G$ such that $1\not\in S$. Define a \textbf{Cayley digraph of $G$}, denoted $\text{Cay}(G,S)$, to be the digraph with vertex set $V(\text{Cay}(G,S)) = G$ and arc set $A(\text{Cay}(G,S)) = \{(g,gs) : g \in  \nobreak G,  s \in S\}.$ We call $S$ the \textbf{connection set of $\text{Cay}(G,S)$}.
\end{definition}

So a Cayley digraph of $G$ is simply the special case of a double coset digraph of $G$ with $H = 1$.

\section{Double coset digraphs as wreath products}

We begin with a result which is probably known by many readers, but which we will state and prove anyway due to its importance.

\begin{lemma}\label{block form}
Let $G$ be a group, $H\le G$, and let $G$ act on $G/H$ by left multiplication.  Then $G$ acts transitively on $G/H$, and any block system ${\cal B}$ of this action has blocks the set of left cosets of $H$ contained in some left coset of a subgroup $H \le K\le G$.
\end{lemma}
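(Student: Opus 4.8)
The plan is to work with the correspondence between blocks of a transitive action and subgroups lying above a point stabilizer. First I would record the standard fact that the stabilizer of the point (coset) $H \in G/H$ under left multiplication is $H$ itself, so that the action of $G$ on $G/H$ is permutation-isomorphic to the natural action and point stabilizers are exactly the conjugates of $H$. Then, given a block system $\mathcal{B}$, let $B_0 \in \mathcal{B}$ be the block containing the coset $H$, and set $K = \{g \in G : g(B_0) = B_0\}$, the stabilizer of the block $B_0$ in $G$. Since $B_0$ is a block, $K$ is a subgroup of $G$, and since $H$ fixes the point $H \in B_0$, we have $H \le K \le G$. The first thing to verify is that $K$ is exactly the union of the left cosets of $H$ appearing in $B_0$, i.e.\ that $B_0 = \{gH : g \in K\} = K/H$ (viewing $K/H$ as a subset of $G/H$); this is a short orbit-stabilizer argument, using that $K$ acts transitively on $B_0$ (it is the setwise stabilizer of a block in a transitive group, hence transitive on that block) with the stabilizer of the point $H$ inside $K$ being $H$.

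Next I would transport this to the other blocks: for an arbitrary $B \in \mathcal{B}$, pick $g \in G$ with $g(B_0) = B$ (possible since $G$ permutes $\mathcal{B}$ transitively, as $\mathcal{B}$ is a block system of a transitive group), and then $B = g(B_0) = g(K/H) = \{gkH : k \in K\} = gK/H$, the set of left cosets of $H$ contained in the left coset $gK$ of $K$. So every block is of the claimed form, and conversely every left coset $gK$ of $K$ gives a block, namely $g(B_0)$. This shows $\mathcal{B}$ is precisely the set of "left cosets of $H$ lying inside a left coset of $K$," which is the statement.

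The main obstacle — really the only point needing care — is the step identifying $B_0$ with $K/H$, i.e.\ showing that the setwise stabilizer $K$ of the block $B_0$ acts transitively on $B_0$ and recovers $B_0$ as a union of cosets of $H$. The transitivity is where one uses that $G$ is transitive on $G/H$ and $B_0$ is a block: for any two points $x, y \in B_0$ there is $g \in G$ with $g(x) = y$, and then $g(B_0) \cap B_0 \ni y$ is nonempty, so $g(B_0) = B_0$, i.e.\ $g \in K$. Once transitivity of $K$ on $B_0$ is in hand, orbit-stabilizer gives $|B_0| = [K : H]$ and the inclusion $K/H \subseteq B_0$ (every $kH$ lies in $B_0$ since $k$ fixes $B_0$ setwise and $H \in B_0$) forces equality by counting. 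I would also note in passing that transitivity of $G$ on $G/H$ itself is immediate, since $gH = (gh^{-1}) \cdot hH$ shows any coset is reachable from any other.
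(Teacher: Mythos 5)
Your proof is correct, and its overall skeleton matches the paper's: identify a subgroup $K$ with $H\le K\le G$ whose cosets of $H$ make up the block containing the point $H$, then translate by transitivity to get the other blocks. The only real difference is that the paper simply cites the standard block--subgroup correspondence (\cite[Theorem 1.5A]{DixonM1996}) to produce $K$ and the identification $B_0=K/H$, whereas you reprove that correspondence from scratch by taking $K$ to be the setwise stabilizer of $B_0$ and verifying $H\le K$, transitivity of $K$ on $B_0$, and $B_0=K/H$. Your version is self-contained, which is a small gain; the paper's is shorter. One minor remark on your argument: the orbit-stabilizer counting step ($|B_0|=[K:H]$ plus the inclusion $K/H\subseteq B_0$) is not needed and tacitly assumes finiteness -- once you know $K$ acts transitively on $B_0$ and that $H\in B_0$, every point of $B_0$ is $k(H)=kH$ for some $k\in K$, so $B_0\subseteq K/H$ follows directly and equality is immediate without counting.
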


\begin{proof}
That $G$ is transitive on $G/H$ is well known.  By \cite[Theorem 1.5A]{DixonM1996}, there is $K\ge H$ such that the block $B\in{\cal B}$ that contains $H$ is the orbit of $K$ that contains $H$.  Then $B$ is the set of all left cosets of $H$ contained in $K$.  Any block conjugate to $B$ is simply of the form $gB$ for some $g\in G$, and so is the set of left cosets of $H$ contained in $gK$.
\end{proof}

\begin{lemma}\label{K determined lemma}
Let $G$ be a group, $H\le G$, and $S\subset G$ such that $S\cap H = \emptyset$ and $S = HSH$.  Suppose $\Gamma = \Cos(G,H,S) \cong \Gamma_1\wr\Gamma_2$ for $\Gamma_1$ and $\Gamma_2$ vertex-transitive digraphs  with at least two vertices.  Let ${\cal B}$ be the lexi-partition of $\Aut(\Gamma_1)\wr\Aut(\Gamma_2)$ with respect to $\Aut(\Gamma_2)$. If ${\cal B}$ is the set of left cosets of $K$ in $G$, then $S\setminus K= K(S\setminus K)K$.
\end{lemma}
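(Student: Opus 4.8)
The plan is to translate the group-theoretic condition $S\setminus K=K(S\setminus K)K$ into the combinatorial wreath-product criterion of Lemma~\ref{wrthm}, applied to the block system $\mathcal B$, and then read the conclusion back off from the description of adjacency in $\Cos(G,H,S)$.

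First I would record two preliminary facts. Since $\mathcal B$ is assumed to be the set of left cosets of $K$ in $G$, Lemma~\ref{block form} forces $H\le K\le G$; the block $B_0$ containing the trivial coset $H$ is then $K/H=\{kH:k\in K\}$, and every block has the form $gK/H=\{gkH:k\in K\}$ for some $g\in G$ (and, as $\Gamma_1$ and $\Gamma_2$ each have at least two vertices, $\mathcal B$ is nontrivial, so in fact $H<K<G$). Secondly, straight from the definition of $\Cos(G,H,S)$ together with the hypothesis $HSH=S$, there is an arc from $aH$ to $bH$ if and only if $a^{-1}b\in S$; in particular this membership does not depend on the chosen coset representatives, so ``every arc from the block $aK/H$ to the block $bK/H$ belongs to $A(\Gamma)$'' is equivalent to $K(a^{-1}b)K\subseteq S$.

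Next I would invoke the recognition lemma. Under the identification of $\Gamma$ with $\Gamma_1\wr\Gamma_2$, the partition $\mathcal B$ is the lexi-partition with respect to $\Aut(\Gamma_2)$, so it is immediate from the definition of the wreath product of digraphs that $\Gamma/\mathcal B\cong\Gamma_1$ and $\Gamma[B_0]\cong\Gamma_2$, whence $\Gamma\cong\Gamma/\mathcal B\wr\Gamma[B_0]$. Applying Lemma~\ref{wrthm}, with the transitive subgroup taken to be the group of permutations of $G/H$ induced by left multiplication by $G$ (which has $\mathcal B$ as a block system), we obtain: for all distinct $B,B'\in\mathcal B$, if there is some arc from a vertex of $B$ to a vertex of $B'$, then every arc from $B$ to $B'$ lies in $A(\Gamma)$.

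To finish, let $s\in S\setminus K$ and put $B=K/H$ and $B'=sK/H$. These blocks are distinct because $s\notin K$, and there is an arc from $H\in B$ to $sH\in B'$ since $s\in S$; hence, by the previous paragraph, every arc from $B$ to $B'$ is present, i.e.\ $KsK\subseteq S$. Because $s\notin K$ and $K$ is a subgroup, $KsK\cap K=\emptyset$, so $KsK\subseteq S\setminus K$. Since $s\in S\setminus K$ was arbitrary, $K(S\setminus K)K=\bigcup_{s\in S\setminus K}KsK\subseteq S\setminus K$, and the reverse inclusion is trivial; this gives $S\setminus K=K(S\setminus K)K$. The only places that require attention are the two translations, in the second and fourth paragraphs — this is precisely where $HSH=S$ is used, to make ``$a^{-1}b\in S$'' a well-defined adjacency relation on cosets, and where one must carefully match the ``some arc / every arc between blocks'' language of Lemma~\ref{wrthm} with the double-coset set product $KsK$; the wreath-product machinery supplies everything else, so no step is genuinely hard, only bookkeeping.
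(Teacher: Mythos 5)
Your proposal is correct and takes essentially the same route as the paper: both identify ${\cal B}$ with the fibers of the wreath decomposition and use the all-or-nothing arc property between distinct blocks (which the paper reads directly from the definition of $\Gamma_1\wr\Gamma_2$ and the lexi-partition, rather than detouring through Lemma \ref{wrthm}), then translate this into $KsK\subseteq S\setminus K$ for each $s\in S\setminus K$. The only cosmetic difference is that the paper phrases the conclusion as $H(S\setminus K)H$ being a union of double cosets of $K$ in $G$, which is equivalent to $S\setminus K=K(S\setminus K)K$ since $S=HSH$.
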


\begin{proof}
By Lemma \ref{block form}, ${\cal B}$ is the set of left cosets of some subgroup $H\le K\le G$.  Suppose $a\in S\setminus K$.  Let $B = aK\in{\mathcal B}$ (here $aK$ is viewed as a union of left cosets of $H$ in $G$).  Then $H \not\subseteq B$ as $H\le K$.  Since there is an arc from $H$ to $aH$  if and only if there is an arc from $H$ to every left coset of $H$ contained in $B$ as $\Gamma = \Gamma_1\wr\Gamma_2$ and ${\cal B}$ is the lexi-partition with respect to $\Aut(\Gamma_2)$, we conclude that $aK\subseteq S = HSH$, and so $H(S \setminus K)H$ is a union of left cosets of $K$ in $G$.  Then, for every $k , k^\prime \in K$, we have $kH, k^\prime H \subseteq K$, and $(kH, ak^\prime H) \in A(\text{Cos}(G,H,S))$. Hence $k^{-1}ak^\prime \in HSH$ for every $k,k'\in K$. So $KaK\subseteq HSH$.  As $a \not\in K$,  $k^{-1}ak^\prime \in H(S\setminus K)H$. Thus, $H(S \setminus K)H$ is a union of double cosets of $K$ in $G$.
\end{proof}

With the next result, we are able to recognize double coset digraphs that are isomorphic to nontrivial wreath products from their connection sets.

\begin{theorem}\label{cosetwr}
Let $G$ be a group, $H\le G$, and $S\subset G$ such that $S\cap H = \emptyset$ and $HSH = S$.  The double coset digraph $\Gamma = \Cos(G,H,S)$ is isomorphic to a nontrivial wreath product of two vertex-transitive digraphs of smaller order if and only if there exists $H < K < G$ such that $H(S\setminus K)H$ is a union of double cosets of $K$ in $G$. If such a $H < K < G$ exists and $\mathcal{B}$ is the set of left cosets of $K$ in $G$, then
\begin{eqnarray}\label{first equation}
\Cos(G,H,S) \cong \Gamma/\mathcal{B} \wr \Gamma[K] \cong \Cos(G,K,S\setminus K) \wr \Cos(K,H,(S\cap K)).
\end{eqnarray}
Additionally, if $\Gamma$ is neither complete nor the complement of a complete graph and $K$ is chosen to be maximal in $G$ with the above properties, then
\begin{eqnarray}\label{autgrpcoset}
\Aut(\Cos(G,H,S)) \cong \Aut(\Cos(G, K, S \setminus K))\wr \Aut(\Cos(K,H,(S\cap K))).
\end{eqnarray}
\end{theorem}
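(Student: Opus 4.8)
The plan is to prove the three claims in order, leaning on Lemma~\ref{wrthm} for the wreath-product characterization, on Lemma~\ref{K determined lemma} for the reverse direction, and on Theorem~\ref{autwr} for the automorphism group. For the forward direction of the ``if and only if'', suppose $\Gamma \cong \Gamma_1 \wr \Gamma_2$ with $\Gamma_1,\Gamma_2$ vertex-transitive of smaller order. The group $G$ acting by left multiplication on $G/H$ sits inside $\Aut(\Gamma_1 \wr \Gamma_2)$, but a priori need not respect the natural block system of that wreath product; however, $\Aut(\Gamma_1)\wr\Aut(\Gamma_2)$ does have the lexi-partition $\mathcal{B}$ with respect to $\Aut(\Gamma_2)$, and since $G$ is transitive we can pass to $G \le \Aut(\Gamma_1)\wr\Aut(\Gamma_2) \le \Aut(\Gamma)$ and restrict $\mathcal{B}$ to a block system of $G$. (One should note that $\Aut(\Gamma) = \Aut(\Gamma_1\wr\Gamma_2)$ always contains $\Aut(\Gamma_1)\wr\Aut(\Gamma_2)$, so this containment is legitimate.) By Lemma~\ref{block form} this $\mathcal{B}$ is the set of left cosets of some $H \le K \le G$, and $K \ne H$ since $\Gamma_2$ has more than one vertex, while $K \ne G$ since $\Gamma_1$ has more than one vertex. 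Now Lemma~\ref{K determined lemma} gives exactly that $S \setminus K = H(S\setminus K)H$ is a union of double cosets of $K$ in $G$.

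For the reverse direction, suppose $H < K < G$ with $H(S\setminus K)H$ a union of double cosets of $K$. Let $\mathcal{B}$ be the set of left cosets of $K$ in $G$; by Lemma~\ref{block form} this is a block system of $G$ acting on $G/H$. I would verify the combinatorial condition in Lemma~\ref{wrthm}: given distinct blocks $B = aK$ and $B' = bK$, there is an arc from some left coset of $H$ in $B$ to some left coset of $H$ in $B'$ iff $b^{-1}a' \in HSH$ for some $a' K = aK$, i.e. iff $KaK \cap b^{-1}\cdot(\text{something})$ meets $S$; the hypothesis that $H(S\setminus K)H$ is a union of $K$-double cosets is precisely what upgrades ``some arc'' to ``all arcs'' between $B$ and $B'$. (Here one uses that an arc between distinct blocks comes from $S \setminus K$, since arcs within a block come from $S \cap K$.) Hence Lemma~\ref{wrthm} yields $\Gamma \cong \Gamma/\mathcal{B} \wr \Gamma[B_0]$ where $B_0$ is the block containing $H$, which as a digraph on the left cosets of $H$ inside $K$ is $\Cos(K,H,S\cap K)$; and $\Gamma/\mathcal{B}$, with vertex set the left cosets of $K$ and an arc $(aK, bK)$ exactly when some $a^{-1}b \in HSH \setminus K$, is $\Cos(G,K,S\setminus K)$ once one checks $(S\setminus K)\cap K = \emptyset$ and $K(S\setminus K)K = S\setminus K$ — the latter being the hypothesis, the former being immediate. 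This establishes~\eqref{first equation}.

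For~\eqref{autgrpcoset}, assume $\Gamma$ is neither complete nor a complement of a complete graph and $K$ is chosen maximal with the stated property. Write $\Gamma_1 = \Cos(G,K,S\setminus K)$ and $\Gamma_2 = \Cos(K,H,S\cap K)$, so $\Gamma \cong \Gamma_1 \wr \Gamma_2$. If $\Aut(\Gamma) = \Aut(\Gamma_1)\wr\Aut(\Gamma_2)$ we are done, so suppose not. Then Theorem~\ref{autwr} applies: there are $r,s > 1$ and digraphs $\Gamma_1',\Gamma_2'$ with $\Gamma_1 \cong \Gamma_1' \wr K_r$ and $\Gamma_2 \cong K_s \wr \Gamma_2'$ (or the same with complements), and by the remark following Theorem~\ref{autwr}, $\Gamma \cong \Gamma_1' \wr (K_{rs}\wr\Gamma_2')$ (resp. with $\bar K_{rs}$) with the automorphism group splitting as the wreath product over this finer decomposition. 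The main obstacle — and the crux of the argument — is to derive a contradiction with the maximality of $K$. The digraph $K_{rs}\wr\Gamma_2'$ has strictly more vertices than $\Gamma_2$ does (since $r > 1$), so its ``block of the first coordinate'' corresponds to a subgroup $K'$ with $K < K' < G$; I must argue that $\Gamma \cong \Gamma/\mathcal{B}' \wr \Gamma[B_0']$ for $\mathcal{B}'$ the cosets of $K'$ forces, via the forward direction already proved (applied with this new wreath decomposition), that $H(S\setminus K')H$ is a union of $K'$-double cosets, contradicting maximality of $K$ unless $\Gamma_1'$ is trivial. But if $\Gamma_1'$ has one vertex then $\Gamma_1 \cong K_r$ or $\bar K_r$, and combined with the shape of $\Gamma_2$ this forces $\Gamma$ to be complete or a complement of a complete graph (or reduces to $K_{rs}\wr\Gamma_2'$ being all of $\Gamma$, handled similarly), contrary to hypothesis. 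I would need to be careful that the new wreath decomposition indeed has vertex-transitive factors of smaller order and that the induced block system is genuinely a block system of $G$ (not merely of $\Aut(\Gamma_1)\wr\Aut(\Gamma_2)$) — this is the same subtlety as in the forward direction and is handled the same way, by first replacing $\Aut(\Gamma)$ with the subgroup $\Aut(\Gamma_1')\wr S_{rs}\wr\Aut(\Gamma_2')$ which contains $G$ and has the relevant lexi-partition. Once the contradiction is secured, $\Aut(\Gamma) = \Aut(\Gamma_1)\wr\Aut(\Gamma_2)$, which is~\eqref{autgrpcoset}.
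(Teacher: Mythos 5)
Your route is the same as the paper's: the ``only if'' direction via the lexi-partition of $\Aut(\Gamma_1)\wr\Aut(\Gamma_2)$ together with Lemmas \ref{block form} and \ref{K determined lemma}, the ``if'' direction by verifying the hypothesis of Lemma \ref{wrthm} and then identifying $\Gamma/\mathcal{B}$ and $\Gamma[K]$ with the two double coset digraphs in \eqref{first equation}, and \eqref{autgrpcoset} by playing Theorem \ref{autwr} (and the remark after it) against the maximality of $K$. The reverse direction and the maximality argument agree in substance with the paper (your discussion of the degenerate case where $\Gamma_1'$ is a single vertex is, if anything, more explicit), although the step ``$KaK\cap b^{-1}\cdot(\text{something})$ meets $S$'' should be replaced by the actual computation: $(ak_1H,bk_2H)\in A(\Gamma)$ with $aK\neq bK$ gives $k_1^{-1}a^{-1}bk_2\in H(S\setminus K)H=K(S\setminus K)K$, whence all arcs from $aK$ to $bK$ are present.

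The genuine gap is in your forward direction, at ``since $G$ is transitive we can pass to $G\le\Aut(\Gamma_1)\wr\Aut(\Gamma_2)\le\Aut(\Gamma)$ and restrict $\mathcal{B}$ to a block system of $G$.'' Transitivity of $G$ gives no containment of $G$ in the proper subgroup $\Aut(\Gamma_1)\wr\Aut(\Gamma_2)$ of $\Aut(\Gamma)$, and without such a containment the lexi-partition need not be a block system of $G$ at all, hence need not be a set of left cosets of any $K$ --- which is exactly the hypothesis Lemma \ref{K determined lemma} demands before it can be applied (the lemma is deliberately stated conditionally, ``if $\mathcal{B}$ is the set of left cosets of $K$,'' because this is the delicate point; the paper's proof leaves that hypothesis to be supplied, it does not derive it from transitivity as you do). To see your inference really fails, take $G=A_5$ and $H$ the stabilizer of a point in the action on six points (a dihedral subgroup of order $10$, maximal in $G$), with $S=G\setminus H$: then $\Cos(G,H,S)=K_6\cong K_2\wr K_3$, yet $G$ acts primitively on the six cosets, so the lexi-partition is not a union of left cosets of any subgroup, and indeed $A_5$ (order $60$) embeds in no conjugate of $S_2\wr S_3$ (order $72$). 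So the abstract isomorphism $\Gamma\cong\Gamma_1\wr\Gamma_2$ alone cannot force the block structure to align with $G$; you correctly flagged this as the crux and then dismissed it with an invalid argument, and this is precisely where additional input is needed. The same unverified alignment is silently reused in your maximality argument when you say the new first-coordinate block system ``corresponds to a subgroup $K'$.''
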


\begin{proof}
Suppose $\Gamma = \Gamma_1\wr\Gamma_2$, where both $\Gamma_1$ and $\Gamma_2$ are nontrivial and not $\Gamma$.  Then the lexi-partition ${\cal B}$ of $\Aut(\Gamma_1)\wr\Aut(\Gamma_2)$ with respect to $\Aut(\Gamma_2)$ is a block system of $\Aut(\Gamma_1)\wr\Aut(\Gamma_2)\le\Aut(\Gamma)$. The result follows by Lemma \ref{K determined lemma}.

Conversely suppose $H <K <G$ such that $H(S \setminus K)H$ is a union of double cosets of $K$ in $G$. We now show $\Gamma\cong\Gamma/{\cal B}\wr\Gamma[K]$. This will complete the ``if and only if" part of the proof as well as the first part of Equation \ref{first equation}.  Suppose $(ak_1H, bk_2H) \in A(\Gamma)$ and $ak_1H$ and $bk_2H$ are not contained in the same left coset of $K$ in $G$, where $k_1,k_2\in K$.  This gives $k_1^{-1}a^{-1}bk_2\not\in K$.  Then $k_1^{-1}(a^{-1}b)k_2 \in K(S \setminus K)K = H(S \setminus K)H$, with the last equality holding as $H(S\setminus K)H$ is a union of double cosets of $K$ in $G$. Thus $K(a^{-1}b)K \subseteq K(S \setminus K)K$ and $(ak'H, bkH) \in A(\Gamma)$ for all $k, k' \in K$.  This means that whenever $B, B^\prime \in \B$ are distinct then there is an arc $(x, x^\prime)$ from a vertex $x \in B$ to a vertex $x^\prime \in B^\prime$ if and only if every arc of the form $(x, x^\prime)$ with $x \in B$ and $x^\prime \in B^\prime$ is contained in $A(\Gamma).$ By Lemma \ref{wrthm}, $\Gamma \cong \Gamma/\mathcal{B} \wr \Gamma[K]$.

We next show $\Gamma/{\cal B} = \Cos(G,K,S\setminus K)$.  The digraph $\Cos(G,K,S\setminus K)$ is a well-defined double coset digraph as $S\setminus K$ is a union of double cosets of $K$.  Let $a,b\in G$ such that $a^{-1}b\not\in K$ (or $a^{-1}b\in S\setminus K$).  Then $(aK,bK)\in A(\Gamma/{\cal B})$ if and only if there is $a_1,b_1\in G$ such that $a_1H\subseteq aK$, $b_1H\subseteq bK$, and $(a_1H,b_1H)\in A(\Gamma)$.  This occurs if and only if $a_1^{-1}b_1H\subseteq S$.  As $S\setminus K$ is a union of double cosets of $K$ and $a_1^{-1}b_1^{-1}H\subseteq a^{-1}bK$, we see $a_1^{-1}b_1K\in S\setminus K$ (viewing $S\setminus K$ as a union of left cosets of $K$ in $G$).  Thus $(aK,bK)\in A(\Gamma/{\cal B})$ if and only if $a^{-1}bK\in S\setminus K$, which occurs if and only if $(aK,bK)\in A(\Cos(G,K,S\setminus K))$.  So $\Gamma/{\cal B} = \Cos(G,K,S\setminus K)$.

As $K$ is a left coset of itself, $K \in \B$. Then $\Cos(G,H,S)[K] = \Cos(K,H,K\cap S)$, and so $\Gamma[K] = \Cos(K,H,S\cap K)$.
This completes the proof of Equation \ref{first equation}.

It now only remains to show that if $\Gamma$ is neither complete nor the complement of a complete graph and $K$ is maximal with the property that $H(S\setminus K)H$ is a union of double cosets of $K$, then Equation \eqref{autgrpcoset} holds.
Suppose otherwise.  Then $\Cos(G,K,S\setminus K)$ can be written as a nontrivial wreath product by Theorem \ref{autwr} and the comment following it.  By what we have already shown, there is some $K < K' < G$ such that $S\setminus K'$ is a union of double cosets of $K'$ in $G$, contradicting the maximality of $K$.
\end{proof}

If one follows the convention that $H$ is core-free in $G$ for double coset digraphs, then with $N = \core_G(H)$, ${\cal C}$ the set of left cosets of $K/N$ in $G/N$, and $T = \{(sN)(K/N) : s \in H(S \setminus K)H\}$, Equation \ref{autgrpcoset} becomes
\begin{eqnarray*}
\Cos(G/N,H/N,\{sN:s\in S\}) & \cong & \Gamma/\mathcal{C} \wr \Gamma[K/N]\\
                            & \cong & \Cos(G/N,K/N,T) \wr \Cos(K/N,H/N,(S\cap K)N).
\end{eqnarray*}

When $H = \{1_G\}$, $\Cos(G,H,S) \cong \Cay(G,S)$ and we have a special case of Theorem \ref{cosetwr} for Cayley digraphs.

\begin{corollary}\label{nonabelwr}
Let $G$ be a group and $S\subset G$ such that $1\not\in S$.  The Cayley digraph $\Gamma = \Cay(G,S)$ is isomorphic to a nontrivial wreath product of two vertex-transitive digraphs of smaller order if and only if there exists $1 < K < G$ such that $S \setminus K$ is a union of double cosets of $K$ in $G$. If such a $1 < K < G$  exists and $\mathcal{B}$ is the set of left cosets of $K$ in $G$, then
$$\Cay(G,S) \cong \Gamma/\mathcal{B} \wr \Gamma[K] \cong \Cos(G,K,S\setminus K) \wr \Cay(K,S \cap K).$$
Additionally, if $\Gamma$ is neither complete nor the complement of a complete graph and $K$ is chosen to be maximal in $G$ with the above properties, then
$$\Aut(\Cay(G,S)) \cong \Aut(\Cos(G,K,S\setminus K))\wr \Aut(\Cay(K,S \cap K)).$$
\end{corollary}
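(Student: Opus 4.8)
The plan is to derive this simply as the special case $H = \{1_G\}$ of Theorem \ref{cosetwr}. First I would check that the hypotheses match up under this specialization: the condition $S \cap H = \emptyset$ becomes $1_G \notin S$, and $HSH = S$ holds automatically since $1_G S 1_G = S$, so these are exactly the standing hypotheses of the corollary. As observed immediately after the definition of Cayley digraphs, $\Cos(G,\{1_G\},S) \cong \Cay(G,S)$, so $\Gamma = \Cos(G,\{1_G\},S)$ in the language of Theorem \ref{cosetwr}.

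Next I would translate each conclusion. The quantifier ``there exists $H < K < G$ such that $H(S \setminus K)H$ is a union of double cosets of $K$ in $G$'' becomes ``there exists $1 < K < G$ such that $S \setminus K = 1_G(S \setminus K)1_G$ is a union of double cosets of $K$ in $G$'', which is precisely the condition in the corollary; here one notes that $1_G < K$ is automatic for any nontrivial subgroup, so ``$H < K < G$'' is exactly ``$1 < K < G$''. Equation \eqref{first equation} then reads $\Cos(G,\{1_G\},S) \cong \Gamma/\mathcal{B} \wr \Gamma[K] \cong \Cos(G,K,S \setminus K) \wr \Cos(K,\{1_G\},S \cap K)$, and applying the identification $\Cos(K,\{1_G\},S \cap K) \cong \Cay(K,S \cap K)$ (the same observation applied inside $K$, valid since $(S \cap K) \cap \{1_G\} = \emptyset$) gives the displayed isomorphism of the corollary.

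Finally, for the automorphism group, I would invoke the last part of Theorem \ref{cosetwr} with $H = \{1_G\}$ and $K$ chosen maximal with the required property, carrying over verbatim the hypothesis that $\Gamma$ is neither complete nor the complement of a complete graph. Equation \eqref{autgrpcoset} then becomes $\Aut(\Cos(G,\{1_G\},S)) \cong \Aut(\Cos(G,K,S \setminus K)) \wr \Aut(\Cos(K,\{1_G\},S \cap K))$, and substituting $\Cos(G,\{1_G\},S) \cong \Cay(G,S)$ and $\Cos(K,\{1_G\},S \cap K) \cong \Cay(K,S \cap K)$ on both sides yields the stated formula. Since the argument is a direct substitution $H = \{1_G\}$ into a result already proved, there is no genuine obstacle; the only thing requiring a moment's care is confirming that ``union of double cosets of $K$ in $G$'' is unaffected by the trivial conjugation and that the subgroup quantifier collapses correctly, both of which are immediate.
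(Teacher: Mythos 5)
Your proposal is correct and is exactly the paper's route: the corollary is obtained by specializing Theorem \ref{cosetwr} to $H = \{1_G\}$, noting that $S\cap H=\emptyset$ becomes $1\not\in S$, $HSH=S$ is automatic, and $\Cos(G,\{1_G\},S)\cong\Cay(G,S)$, $\Cos(K,\{1_G\},S\cap K)\cong\Cay(K,S\cap K)$. The paper states this substitution without further proof, and your careful checks of the hypothesis translation and the subgroup quantifier are just a more explicit write-up of the same argument.
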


We note that in \cite{Chudnovsky2024} it was shown that for the special case of Cayley graphs, we only need that $SH = S$. They also mention that for graphs the condition that $SH = S$ and $HSH = S$ are equivalent. This is easy to see as if $SH = S$ and $S^{-1} = S$, then $SH = S$ implies that $(SH)^{-1} = H^{-1}S^{-1} = HS = H(SH) = HSH$.
If, in the previous result, we have $K\tl G$, then we get a slightly nicer sufficient (but not necessary) condition for a Cayley graph of $G$ to be a wreath product.

\begin{corollary}\label{wrnorm}
Let $G$ be a group and $S\subset G$ such that $1\not\in S$.  The Cayley digraph $\Gamma = \Cay(G,S)$ is isomorphic to a nontrivial wreath product of two vertex-transitive digraphs of smaller order if there exists $1 < K \triangleleft G$ such that $S \setminus K$ is a union of cosets of $K$ in $G$. In this case, if ${\cal B}$ is the set of cosets of $K$ in $G$, then
$$\Cay(G,S) \cong \Gamma/\B \wr \Gamma[K] \cong \Cay(G/K,S_1) \wr \Cay(K,S_2)$$
where $S_1$ is the set of cosets of $K$ contained in $S$ and $S_2 = K \cap S.$  Additionally, if $\Gamma$ is not complete nor the complement of a complete graph and $K$ is chosen to be maximal in $G$ with the above properties, then
$$\Aut(\Cay(G,S)) \cong \Aut(\Cay(G/K,S_1))\wr \Aut(\Cay(K,S_2).$$
\end{corollary}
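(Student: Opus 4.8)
The plan is to derive this directly from Corollary \ref{nonabelwr} by observing that when $K \triangleleft G$, every double coset of $K$ collapses into a single (left = right) coset of $K$, so the somewhat opaque hypothesis ``$S\setminus K$ is a union of double cosets of $K$'' simplifies to the clean ``$S\setminus K$ is a union of cosets of $K$,'' and the double coset digraph $\Cos(G,K,S\setminus K)$ becomes an ordinary Cayley digraph $\Cay(G/K,S_1)$. First I would note that since $K \triangleleft G$, for any $g\in G$ we have $KgK = gKK = gK$, hence the $(K,K)$-double cosets of $G$ are exactly the cosets of $K$ in $G$; therefore the condition in Corollary \ref{nonabelwr} that $S\setminus K$ be a union of double cosets of $K$ is literally the condition that $S\setminus K$ be a union of cosets of $K$. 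This shows the stated hypothesis implies the hypothesis of Corollary \ref{nonabelwr}, so $\Gamma$ is a nontrivial wreath product and the first displayed isomorphism $\Gamma \cong \Gamma/\B \wr \Gamma[K] \cong \Cos(G,K,S\setminus K)\wr\Cay(K,S\cap K)$ holds verbatim from that corollary, with $\B$ the set of cosets of $K$.

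Next I would identify $\Cos(G,K,S\setminus K)$ with $\Cay(G/K,S_1)$. Because $K$ is normal, the left cosets $G/K$ form a group, the vertex set of $\Cos(G,K,S\setminus K)$ is exactly the group $G/K$, and the arc set $\{(gK, gsK) : g\in G,\ s\in S\setminus K\}$ is precisely $\{(\bar g, \bar g \bar s) : \bar g \in G/K,\ \bar s \in S_1\}$ where $S_1 = \{sK : s\in S\setminus K\}$ is the image of $S\setminus K$ in $G/K$ — equivalently the set of cosets of $K$ contained in $S$, using $S\cap K$ being a union of (the single) coset... more precisely $S_1$ is the set of cosets of $K$ lying inside $S$, which are exactly the cosets coming from $S\setminus K$ since any coset of $K$ meeting $K$ is $K$ itself and $1\notin S$ forces that coset not to be wholly in $S$ unless it equals a nontrivial coset — so $S_1 = \{sK : s\in S\setminus K\}$. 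Hence $\Cos(G,K,S\setminus K) = \Cay(G/K,S_1)$ as digraphs, and combined with $\Gamma[K] = \Cay(K, S\cap K) = \Cay(K,S_2)$ (from Theorem \ref{cosetwr}, or directly), we obtain the second displayed isomorphism.

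Finally, for the automorphism group statement, I would invoke the last line of Corollary \ref{nonabelwr}: if $\Gamma$ is neither complete nor the complement of a complete graph and $K$ is maximal with the property that $S\setminus K$ is a union of double cosets of $K$ — which under normality of $K$ is the same as $S\setminus K$ being a union of cosets of $K$ — then $\Aut(\Cay(G,S)) \cong \Aut(\Cos(G,K,S\setminus K))\wr\Aut(\Cay(K,S\cap K))$, and substituting the identification $\Cos(G,K,S\setminus K) = \Cay(G/K,S_1)$ and $S_2 = S\cap K$ gives the claimed formula. The one point requiring a small remark is that the maximality here is maximality among normal subgroups with the union-of-cosets property, which is a priori weaker than the maximality in Corollary \ref{nonabelwr} over all subgroups with the union-of-double-cosets property; I expect this is the only genuine subtlety, and it is handled by reading the hypothesis of Corollary \ref{wrnorm} as stated (``$K$ chosen to be maximal in $G$ with the above properties,'' the above properties including $K\triangleleft G$), so that no conflict arises — the corollary is simply the specialization of Corollary \ref{nonabelwr} to the normal case, and this restriction of the maximality is exactly what the hypothesis intends. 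No step involves a hard argument; the ``main obstacle,'' such as it is, is just being careful that the normality hypothesis is doing the work of turning double cosets into cosets and that $S_1$ is correctly described.
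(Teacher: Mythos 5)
Your handling of the first two assertions is correct and is essentially the paper's own route: both you and the paper specialize Corollary \ref{nonabelwr}, using that normality of $K$ makes the $(K,K)$-double cosets of $G$ coincide with the cosets of $K$, and then identify the top factor with $\Cay(G/K,S_1)$. The only difference is cosmetic: you identify $\Cos(G,K,S\setminus K)$ with $\Cay(G/K,S_1)$ by writing out its vertex and arc sets (using $gK\cdot sK=gsK$), whereas the paper observes that $G_L/{\cal B}=(G/K)_L$ is regular, so that $\Gamma/{\cal B}$ is a Cayley digraph of $G/K$, and then reads off the connection set $S_1$; these amount to the same identification.

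The automorphism statement is where your proposal has a genuine gap, and it is exactly the point you flag and then wave away. Maximality of $K$ among \emph{normal} subgroups with $S\setminus K$ a union of cosets of $K$ is strictly weaker than the hypothesis in the last part of Corollary \ref{nonabelwr}, which requires maximality among \emph{all} subgroups $K'$ with $S\setminus K'$ a union of double cosets of $K'$, and ``reading the hypothesis as stated'' does not repair this: under the weaker maximality you are not entitled to cite Corollary \ref{nonabelwr}, and the conclusion can in fact fail under that reading. For example, let $G=S_4$, $K=V_4$, let $N$ be a Sylow $2$-subgroup of $S_4$ containing $V_4$ (so $N$ is not normal), and let $S=\{(12)(34)\}\cup(S_4\setminus N)$. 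Then $S\setminus K=S_4\setminus N$ is a union of cosets of $V_4$, while $S\setminus A_4$ consists of $8$ of the $12$ odd permutations and so is not a union of cosets of $A_4$; hence $K=V_4$ is maximal among normal subgroups with the stated property. But $\Cay(G/K,S_1)\cong K_3\wr\bar{K}_2$ and $\Cay(K,S_2)\cong\bar{K}_2\wr K_2$, so $\Gamma\cong K_3\wr\bar{K}_4\wr K_2$ and Theorem \ref{autwr} gives $\Aut(\Gamma)\cong S_3\wr S_4\wr S_2$, which is strictly larger than $\Aut(\Cay(G/K,S_1))\wr\Aut(\Cay(K,S_2))$ (orders $6\cdot 384^3$ versus $48\cdot 8^6$). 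The culprit is the non-normal subgroup $N>K$ for which $S\setminus N$ is a union of double cosets of $N$. So your last step needs $K$ to be maximal in the sense of Corollary \ref{nonabelwr}, not merely among normal subgroups; to be fair, the paper's proof disposes of this with the same one-line citation of Corollary \ref{nonabelwr}, so you have not missed an argument the paper supplies, but your explicit claim that the weaker maximality causes ``no conflict'' is not justified and, as the example shows, cannot be.
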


\begin{proof}
As $K\tl G$, $G/K$ is a group of order $[G:K]$.  Also, $\Stab_K(1) = 1$, and so $K$ is semiregular of order, say, $k$.  Let $n = \vert G\vert$.  Then $[G:K] = n/k$ and $\fix_G({\cal B}) = \{k_L:k\in K\}$ has order $k$.  As $G_L/{\cal B}$ is transitive of order $n/k$, we see $G_L/{\cal B} = (G/K)_L$ is regular.  Hence $\Gamma/{\cal B}$ is a Cayley digraph of $G/K$.  As $G\setminus K$ is a union of left cosets of $K$ in $G$ (as it is a union of double cosets of $K$), we see that $\Gamma/{\cal B}$ has connection set $S_1$.  The rest follows by Corollary \ref{nonabelwr}.
\end{proof}

In the case when {\it all} subgroups of $G$ are normal, the above sufficient condition is also necessary.  Groups in which every subgroup are normal are called {\bf Dedekind groups}.  Obviously, abelian groups are Dedekind groups, and non-abelian Dedekind groups are called the {\bf Hamilton groups}.  Hamilton groups have the form $G = Q_8\times B\times D$ \cite[Theorem 12.5.4]{Hall1976}, where $Q_8$ is the quaternion group of order $8$, $B$ is an elementary abelian $2$-group, and $D$ is a finite abelian group of odd order.  This result also generalizes the known result mentioned earlier that Cayley digraphs of abelian groups can be written as nontrivial wreath products of two digraphs of smaller order if and only if its connection set is a union of cosets of some subgroup.

\section{Applications to double coset digraphs}

Sabidussi has shown that there is a strong relationship between Cayley graphs and double coset graphs.  He showed in \cite[Theorem 4]{Sabidussi1964} that $\Cos(G,H,S)\wr \bar{K}_n$, where $n = \vert H\vert$, is isomorphic to a Cayley graph $\Gamma$ of $G$ ($\Gamma$ has connection set a union of left cosets of $H$ in $G$).  He also showed in \cite[Theorem 2]{Sabidussi1964} that every double coset graph of $G$ is the quotient of a Cayley graph of $G$ with connection set $S$ by the partition ${\cal P}$ of $G$ given by the left cosets of a subgroup of $G$ that is disjoint from $S$.  Examining the proofs of these two results (but not the statements), it turns out the two Cayley graphs Sabidussi constructs in \cite[Theorems 2 and 4]{Sabidussi1964} are equal.

In the first part of this section, we more or less continue where Sabidussi left off, and tighten and make clearer the relationship between Cayley digraphs of $G$ and double coset digraphs of $G$, culminating in a new, graph theoretic definition of a double coset digraph of $G$ in Definition \ref{coset alt definition}.  For the rest of this section, we exploit Theorem \ref{cosetwr}, which intuitively allows for the use of a stronger quotient than the block quotient given in Definition \ref{block quotient defin}.  The block quotient, which Sabidussi also used, maps arcs of $\Gamma$ to arcs of $\Gamma/{\cal P}$.  The stronger quotient maps arcs of $\Gamma$ to arcs of $\Gamma/{\cal P}$ and non-arcs of $\Gamma$ to non-arcs of $\Gamma/{\cal P}$.  This allows more combinatorial and symmetry information to lift from $\Gamma/{\cal B}$ to $\Gamma$ and more to project from $\Gamma$ to $\Gamma/{\cal P}$.  This is what we will exploit in the latter part of this section.

While our definition of a double coset digraph is more or less the usual one, it might be better, for the purposes of this section, to think of first fixing the group $G$, then choosing $S$, and then letting $H\le G$ such that $S = HSH$.  The next lemma shows that such a subgroup always exists, even if it must be $H = 1$, in which case a double coset digraph is isomorphic to a Cayley digraph.  In this manner a double coset digraph of $G$ exists for every $S$, but of course not all of these digraphs will have the same number of vertices.

\begin{lemma}\label{K exists}
Let $G$ be a group, $S\subseteq G$, and $K = \{g\in G:gS = Sg = S\}$.  Then $K\le G$ and $K$ is the maximum subgroup of $G$ for which $S = KSK$.
\end{lemma}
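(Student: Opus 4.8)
The plan is to verify directly that $K = \{g \in G : gS = Sg = S\}$ is a subgroup, then show it is the unique maximum among subgroups $L$ satisfying $S = LSL$.

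\textbf{Step 1: $K$ is a subgroup.} First $1 \in K$ trivially, so $K \neq \emptyset$. If $g, g' \in K$, then $(gg')S = g(g'S) = gS = S$ and similarly $S(gg') = (Sg)g' = Sg' = S$ wait, that should be $S(gg') = (Sg)g' = Sg' = S$; more carefully $S(gg') = (Sg)g' = Sg' = S$. Hmm, let me restate: $S(gg') = (Sg)g' = Sg' = S$. So $gg' \in K$. For inverses, from $gS = S$ we get $S = g^{-1}S$ by left-multiplying by $g^{-1}$, and from $Sg = S$ we get $S = Sg^{-1}$; hence $g^{-1} \in K$. Since $G$ is finite (as is standard in this paper), one could alternatively note that $g \mapsto gs$-style arguments show $K$ is closed and finite hence a subgroup, but the explicit inverse check above works in general. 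Thus $K \le G$.

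\textbf{Step 2: $S = KSK$.} Since $1 \in K$ we have $S \subseteq KSK$. Conversely, for $k, k' \in K$ and $s \in S$, $ksk' \in kSk' = (kS)k' = Sk' = S$, so $KSK \subseteq S$. Hence $S = KSK$.

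\textbf{Step 3: maximality.} Suppose $L \le G$ satisfies $S = LSL$. I claim $L \subseteq K$. Let $\ell \in L$. Then $\ell S \subseteq \ell(LSL) = (\ell L)SL = LSL = S$; applying this to $\ell^{-1} \in L$ gives $\ell^{-1}S \subseteq S$, i.e. $S \subseteq \ell S$, so $\ell S = S$. The argument on the other side is symmetric: $S\ell \subseteq (LSL)\ell = LS(L\ell) = LSL = S$, and using $\ell^{-1}$ gives $S \subseteq S\ell$, so $S\ell = S$. Therefore $\ell \in K$, and $L \le K$. Combined with Step 2, $K$ is a subgroup satisfying $S = KSK$ that contains every such subgroup, hence it is the (unique) maximum subgroup with this property.

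There is no real obstacle here; the only thing to be slightly careful about is not conflating the one-sided conditions (a group $L$ with merely $LSL = S$ need not satisfy $\ell S = S$ for individual $\ell$ a priori, which is exactly why Step 3 uses the inverse trick to upgrade the inclusion $\ell S \subseteq S$ to equality). If one prefers, in the finite setting the inclusion $\ell S \subseteq S$ together with $|\ell S| = |S|$ immediately gives equality, shortening the argument, but the inverse argument keeps the proof valid without any finiteness hypothesis.
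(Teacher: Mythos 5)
Your proposal is correct and follows essentially the same route as the paper: verify $K\le G$ directly (with the inverse check $g^{-1}S = Sg^{-1} = S$), note $S = KSK$, and show any $L\le G$ with $LSL = S$ satisfies $\ell S = S\ell = S$ for all $\ell\in L$, hence $L\le K$. One small remark: the inverse trick in your Step 3 is not needed, since for $\ell\in L$ one has $\ell S = \ell(LSL) = (\ell L)SL = LSL = S$ directly (and likewise on the right); also, please delete the ``wait, let me restate'' self-corrections before this is used anywhere.
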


\begin{proof}
The only part of showing $K\le G$ that is not completely straightforward is showing that if $gS = Sg = S$, then $g^{-1}S = Sg^{-1} = S$.  As $1\cdot S = S\cdot 1 = S$, $S = g^{-1}g\cdot S = g^{-1}S$ and $S = S\cdot gg^{-1} = Sg^{-1}$.  Hence $K\le G$.  To see $K$ is the maximum subgroup of $G$ for which $S = KSK$, if $L\le G$ such that $S = LSL$, then $gS = Sg = S$ for all $g\in L$, and hence $L\le K$.
\end{proof}

We will need some additional terms.

\begin{definition}\label{equiv relation}
Let $\Gamma$ be a digraph.  Define an equivalence relation $R$ on $V(\Gamma)$ by $u\ R\ v$ if and only if the out- and in-neighbors of $u$ and $v$ are the same.  Then $R$ is an equivalence relation on $V(\Gamma)$. We say $\Gamma$ is {\bf irreducible} if the equivalence classes of $R$ are singletons, and {\bf reducible} otherwise.
\end{definition}

The equivalence relation in Definition \ref{equiv relation} was introduced for graphs by Sabidussi \cite[Definition 3]{Sabidussi1959}, and independently rediscovered by Kotlov and Lov\'asz \cite{KotlovL1996}, who call $u$ and $v$ {\bf twins}, and Wilson \cite{Wilson2003}, who calls reducible graphs {\bf unworthy}.  It is easy to see that a vertex-transitive digraph $\Gamma$ is reducible if and only if it can be written as a wreath product $\Gamma_1\wr\bar{K}_n$ for some positive integer $n\ge 2$.  Sabidussi observed in \cite{Sabidussi1964} that $R$ is a $G$-congruence for transitive groups $G\le\Aut(\Gamma)$.

\begin{definition}
Let $G$ be a group.  Define $\Reducible(G)$ to be the set of all Cayley digraphs of $G$ that are reducible.  That is, $\Reducible(G) = \{\Cay(G,S):\Cay(G,S) \cong \Gamma\wr\bar{K}_t, \Gamma {\rm\ a\ digraph,\ } t\ge 2\}$.  Let $\Cos(G)$ be the set of all loopless double coset digraphs of $G$ that are not Cayley digraphs of $G$.   That is,
$$\Cos(G) = \{\Cos(G,H,S):1 < H\le G, H\cap S = \emptyset, {\rm\ and\ }S = HSH\}.$$
\end{definition}

The condition that a double coset digraph $\Cos(G,H,S)$ is loopless means that $H\cap S = \emptyset$.  We will implicitly use the easy to show fact that if $H\le K$, $KSK = S$ and $H\cap S = \emptyset$, then $K\cap S = \emptyset$.  This follows from the also easy to show fact that if $KSK = S$, then $K\cap S = \emptyset$ or $K\cap S = K$.  Note that the digraphs in $\Cos(G)$ need not all have the same order.  If we followed the convention that $H$ is core-free in $G$, the next result would not be true for abelian groups, for example, as the only core-free subgroup of an abelian group is trivial.

\begin{theorem}
Let $G$ be a group.  Define $\gamma\colon\Cos(G)\to \Reducible(G)$ by $\gamma(\Cos(G,H,S)) = \Cay(G,S)$.  Then $\gamma$ is onto, and $\gamma(\Cos(G,H_1,S_1)) = \gamma(\Cos(G,H_2,S_2))$ if and only if $S_1 = S_2$ and $\la H_1,H_2\ra S\la H_1,H_2\ra = S$.
\end{theorem}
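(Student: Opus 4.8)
The plan is to verify each claim in turn: first that $\gamma$ is well-defined and onto, then the forward and backward directions of the characterization of when two double coset digraphs have the same image.

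For well-definedness and ontoness, I would first observe that if $\Cos(G,H,S) \in \Cos(G)$ then $1 < H$ and $HSH = S$ forces $S$ to be a union of left cosets of $H$ in $G$ (since $SH = S$), so $\Cay(G,S) \cong \Gamma_1 \wr \bar K_{\vert H\vert}$ for some digraph $\Gamma_1$ by the standard observation (Sabidussi, or directly: the left cosets of $H$ form a block system of $(G)_L$ and $S$ being a union of these cosets means there are no arcs within a coset and complete arc sets between cosets that are joined, so Lemma~\ref{wrthm} applies with $\Gamma_2 = \bar K_{\vert H\vert}$). Since $\vert H\vert \ge 2$, this shows $\Cay(G,S) \in \Reducible(G)$, so $\gamma$ maps into $\Reducible(G)$. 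For ontoness, given $\Cay(G,S) \in \Reducible(G)$, the equivalence relation $R$ of Definition~\ref{equiv relation} is a $G$-congruence (as Sabidussi observed), hence by Lemma~\ref{block form} its classes are the left cosets of some subgroup $H \le G$, and reducibility means $\vert H\vert \ge 2$. Two vertices $g,g'$ of $\Cay(G,S)$ are $R$-related iff $gS = g'S$ (same out-neighbors) and $g^{-1}\cdot \{x : (x,g)\text{ an arc}\} = g'^{-1}\cdot\{\ldots\}$, i.e. $S^{-1}g^{-1} $ bookkeeping gives that $g,g'$ in the same class iff $g^{-1}g' \in H$ where $H = \{k : kS = S = Sk\}$ — which is exactly the group $K$ of Lemma~\ref{K exists}. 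So $HSH = S$ and $H \cap S = \emptyset$ (as $1 \in H$ and $1 \notin S$, and $H\cap S = \emptyset$ or $H$ by the remark preceding the theorem), giving $\Cos(G,H,S) \in \Cos(G)$ with $\gamma(\Cos(G,H,S)) = \Cay(G,S)$.

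For the characterization: $\gamma(\Cos(G,H_1,S_1)) = \gamma(\Cos(G,H_2,S_2))$ means $\Cay(G,S_1) = \Cay(G,S_2)$ as digraphs on vertex set $G$, which is equivalent to $S_1 = S_2$ (the connection set of a Cayley digraph on $G$ is determined by the out-neighbors of the identity). Write $S := S_1 = S_2$. Now $\la H_1, H_2\ra S \la H_1, H_2\ra = S$: the direction $\supseteq$ is trivial; for $\subseteq$, note $H_iSH_i = S$ for $i = 1,2$, so by Lemma~\ref{K exists} both $H_1$ and $H_2$ lie in $K = \{g : gS = Sg = S\}$, hence $\la H_1, H_2\ra \le K$, and then $\la H_1,H_2\ra S\la H_1,H_2\ra \subseteq KSK = S$. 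Conversely, if $S_1 = S_2 = S$ and $\la H_1,H_2\ra S\la H_1,H_2\ra = S$, then in particular $H_iSH_i = S$ for each $i$, and $\gamma(\Cos(G,H_i,S)) = \Cay(G,S)$ for both $i$, so the images coincide.

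I do not expect a genuine obstacle here; the only point requiring a little care is the in-neighbor bookkeeping showing that the twin relation $R$ on $\Cay(G,S)$ has classes equal to cosets of $K = \{g : gS = Sg = S\}$ — one must check that equality of in-neighbor sets of $g$ and $g'$ translates to $Sg^{-1} = Sg'^{-1}$ (equivalently $g'g^{-1}$ fixes $S$ under right... ) and combine with the out-neighbor condition $gS = g'S$ to land exactly on $g^{-1}g' \in K$ rather than some larger or smaller set. Once that identification is made, everything else reduces to Lemma~\ref{K exists} and elementary facts about Cayley digraphs, and the stray observation (stated just before the theorem in the excerpt) that $HSH = S$, $H\cap S=\emptyset$, $H \le K$ implies $K \cap S = \emptyset$ handles the looplessness needed to stay inside $\Cos(G)$.
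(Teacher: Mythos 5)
Your proof is correct, and on the surjectivity step it takes a genuinely more direct route than the paper. The paper establishes ontoness by choosing a wreath decomposition $\Cay(G,S)\cong\Gamma_1\wr\bar K_t$ for which $\Aut(\Cay(G,S))\cong\Aut(\Gamma_1)\wr S_t$, observing that the lexi-partition is a block system of $G_L$ whose blocks are the left cosets of some $K\le G$, and then invoking Lemma \ref{K determined lemma} to conclude $S=KSK$; you instead compute the twin relation $R$ on $\Cay(G,S)$ explicitly, showing that $g\mathrel{R}g'$ if and only if $g^{-1}g'\in K=\{g\in G: gS=Sg=S\}$ (your in-neighbor bookkeeping $gS^{-1}=g'S^{-1}\Leftrightarrow Sg^{-1}=Sg'^{-1}\Leftrightarrow Sk=S$ with $k=g^{-1}g'$ is right), so that the $R$-classes are the left cosets of the group of Lemma \ref{K exists}, reducibility forces $|K|\ge 2$, and $K\cap S=\emptyset$ comes for free from $1\notin S$. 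This avoids Theorem \ref{autwr} and Lemma \ref{K determined lemma} entirely and has the side benefit of identifying the canonical preimage $\Cos(G,K,S)$ with $K$ maximal, which is exactly what the paper needs next for Corollary \ref{bijection}; the paper's route, by contrast, leans on machinery it has already built and so is shorter on the page. Your handling of well-definedness (direct application of Lemma \ref{wrthm} to the coset blocks rather than citing Theorem \ref{cosetwr}) is the same argument unwound, and your treatment of the equality criterion — using Lemma \ref{K exists} to get $H_1,H_2\le K$, hence $\la H_1,H_2\ra S\la H_1,H_2\ra\subseteq KSK=S$, with the reverse inclusion and the converse direction being immediate — fills in precisely the details the paper dismisses as easy.
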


\begin{proof}
Note that $\gamma$ indeed maps $\Cos(G)$ to $\Reducible(G)$ as if $\Cos(G,H,S)\in \Cos(G)$, then $H\cap S = \emptyset$ and $S = HSH$.  By Theorem \ref{cosetwr} $\Cay(G,S)\cong \Cos(G,H,S)\wr \bar{K}_h$ where $h = \vert H\vert$.  Thus $\Cay(G,S)$ is reducible, and the image of $\gamma$ is contained in $\Reducible(G)$.  To see $\gamma$ is onto, let $\Cay(G,S)\in \Reducible(G)$. Then $\Cay(G,S)$  $\cong \Gamma_1\wr\bar{K}_t$, where $\Gamma_1$ is a digraph and $t\ge 2$.  Choose $\Gamma_1$ and $t$ so that $\Aut(\Cay(G,S)) \cong \Aut(\Gamma_1)\wr\Aut(\bar{K}_t)$, in which case $\Aut(\Cay(G,S)) \cong \Aut(\Gamma_1)\wr S_t$.  Then $\Aut(\Cay(G,S))$ has the lexi-partition ${\cal B}$ with respect to $S_t$ as a block system with blocks of size $t$.  As $G_L\le\Aut(\Cay(G,S))$, the blocks of ${\cal B}$ are the left cosets of some subgroup $K$ of $G$.  As $\Cay(G,S) = \Gamma_1\wr\bar{K}_t$ and ${\cal B}$ is the lexi-partition, $\Cay(G,S)[gK]\cong\bar{K}_t$ for every $g\in G$ and so $S\cap K = \emptyset$.  By Lemma \ref{K determined lemma}, $S = K(S \setminus K)K = KSK$.  Then $\Cos(G,K,S)\in\Cos(G)$ and $\gamma(\Cos(G,K,S)) = \Cay(G,S)$.  Thus $\gamma$ is onto.

To see $\gamma(\Cos(G,H_1,S_1)) = \gamma(\Cos(G,H_2,S_2))$ if and only if $S_1 = S_2$ and $\la H_1,H_2\ra S\la H_1,H_2\ra = S$, let $\Cos(G,H_1,S_1),\Cos(G,H_2,S_2)\in\Cos(G)$.  Suppose $\gamma(\Cos(G,H_1,S_1)) = \gamma(\Cos(G,H_2,S_2))$.  Then $\Cay(G,S_1) = \Cay(G,S_2)$ and as $S_1$ and $S_2$ are both the neighbors of $1_G$, $S_1 = S_2$.  Set $S = S_1 = S_2$.  Then $H_1SH_1 = S$, $H_2SH_2 = S$, and it is then easy to see that $\la H_1,H_2\ra S \la H_1,H_2\ra = S$.  The converse is similarly difficult.
\end{proof}

As, by Lemma \ref{K exists}, for every group $G$ and $S\subset G$ there is a maximal (so unique) subgroup $K$ with $KSK = S$, we may restrict the domain of $\gamma$ to these unique subgroups $K$, and obtain a bijection.

\begin{corollary}\label{bijection}
Let $G$ be a group.  Let $\Cos_U(G)$ be the set of all loopless double coset digraphs of $G$ with connection set $S$ such that $K$ is chosen to be maximal with $KSK = S$.  That is, $\Cos_U(G) = \{\Cos(G,K,S):K\le G, S = KSK {\rm\ and\ }g_1Sg_2\not = S{\rm\ for\ some\ }  g_1,g_2\in G\setminus K\}$.  Define $\gamma_U\colon\Cos_U(G)\to \Reducible(G)$ by $\gamma_U(\Cos(G,H,S)) = \Cay(G,S)$.  Then $\gamma_U$ is a bijection.
\end{corollary}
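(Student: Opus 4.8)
The plan is to view $\gamma_U$ as the restriction of the map $\gamma$ of the preceding theorem to the canonical fiber representatives furnished by Lemma \ref{K exists}, and then to deduce surjectivity of $\gamma_U$ from surjectivity of $\gamma$ and injectivity of $\gamma_U$ from the fact that a Cayley digraph of $G$ determines its connection set.

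I would begin by pinning down the domain. By Lemma \ref{K exists}, for each $S\subseteq G$ with $1\notin S$ there is a unique maximal subgroup $K_S=\{g\in G:gS=Sg=S\}$ with $K_SSK_S=S$; in particular, for a subgroup $H\le G$ one has $HSH=S$ if and only if $H\le K_S$. Since $1\in K_S$ and $1\notin S$, and $K_S\cap S$ is $\emptyset$ or contains $1$, we get $K_S\cap S=\emptyset$; so whenever $K_S\neq 1$ the triple $(G,K_S,S)$ gives a loopless double coset digraph $\Cos(G,K_S,S)$ that is not a Cayley digraph of $G$, i.e.\ $\Cos(G,K_S,S)\in\Cos(G)$. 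These $\Cos(G,K_S,S)$ with $K_S\neq 1$ are exactly the elements of $\Cos_U(G)$, since the requirement in the statement that $K$ be chosen maximal with $KSK=S$ forces $K=K_S$. As the preceding theorem shows $\gamma$ maps $\Cos(G)$ into $\Reducible(G)$, the restriction $\gamma_U=\gamma|_{\Cos_U(G)}$ is a well-defined map $\Cos_U(G)\to\Reducible(G)$.

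For surjectivity I would invoke that $\gamma$ is onto (preceding theorem): given $\Cay(G,S)\in\Reducible(G)$, pick $\Cos(G,H,S')\in\Cos(G)$ with $\gamma(\Cos(G,H,S'))=\Cay(G,S)$. Then $\Cay(G,S')=\Cay(G,S)$, whence $S'=S$ since the connection set of a Cayley digraph of $G$ is recovered as the set of out-neighbors of $1_G$; and $1<H$ with $HSH=S$ gives $H\le K_S$, so $K_S\neq 1$ and $\Cos(G,K_S,S)\in\Cos_U(G)$ with $\gamma_U(\Cos(G,K_S,S))=\Cay(G,S)$. For injectivity, suppose $\gamma_U(\Cos(G,K_{S_1},S_1))=\gamma_U(\Cos(G,K_{S_2},S_2))$, that is $\Cay(G,S_1)=\Cay(G,S_2)$; the same out-neighbor observation yields $S_1=S_2$, and then $K_{S_1}=K_{S_2}$ by the uniqueness in Lemma \ref{K exists}, so the two double coset digraphs coincide. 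Equivalently, one reads off from the preceding theorem that the fiber of $\gamma$ over $\Cay(G,S)$ equals $\{\Cos(G,H,S):1<H\le K_S\}$, and this set meets $\Cos_U(G)$ in precisely the one element $\Cos(G,K_S,S)$.

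I do not anticipate a genuine obstacle, since the substantive work is already done: surjectivity of $\gamma$ and the description of its fibers are the content of the preceding theorem, and the existence and uniqueness of the maximal $K_S$ are Lemma \ref{K exists}. The only step calling for care is the identification of $\Cos_U(G)$ with $\{\Cos(G,K_S,S):1\notin S,\ K_S\neq 1\}$ together with the verification that $\Cos_U(G)\subseteq\Cos(G)$: the hypothesis $1\notin S$ is what delivers $K_S\cap S=\emptyset$, and it is precisely the requirement $K_S\neq 1$ (equivalently, that $\Cay(G,S)$ be reducible) that keeps the image of $\gamma_U$ inside $\Reducible(G)$.
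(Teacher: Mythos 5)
Your proposal is correct and follows the paper's own (very brief) justification: the paper simply notes that restricting $\gamma$ to the unique maximal subgroup $K$ from Lemma \ref{K exists} yields a bijection, and your write-up fills in exactly that argument (surjectivity from the onto-ness of $\gamma$, injectivity from recovering $S$ as the out-neighbors of $1_G$ plus uniqueness of $K_S$). No gaps; your added checks that $K_S\cap S=\emptyset$ and that maximality forces $K=K_S$ are the same implicit facts the paper relies on.
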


\begin{theorem}\label{alt coset definition}
Let $G$ be a group, $H\le G$, and $S\subset G$ such that $S\cap H = \emptyset$.  Then $\Cos(G,H,S)$ is a well-defined double coset digraph of $G$ if and only if the set of equivalence classes of $R$ in $\Cay(G,S)$ is refined by $G/H$. Additionally, if $\Cos(G,H,S)$ is a well-defined double coset digraph of $G$, then $\Cos(G,H,S) = \Cay(G,S)/(G/H)$.
\end{theorem}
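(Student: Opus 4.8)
The plan is to first describe the $R$-equivalence classes of $\Cay(G,S)$ group-theoretically and then read both assertions off that description. Set $K = \{g\in G: gS = Sg = S\}$, which by Lemma \ref{K exists} is the maximum subgroup of $G$ with $KSK = S$. In $\Cay(G,S)$ the out-neighbors of a vertex $u$ form the set $uS$ and the in-neighbors form $uS^{-1}$, so $u\ R\ v$ if and only if $uS = vS$ and $uS^{-1} = vS^{-1}$; writing $w = v^{-1}u$, this is equivalent to $wS = S$ and $Sw = S$, that is, to $w\in K$. Hence the $R$-class of a vertex $g$ is exactly the left coset $gK$, so the partition of $V(\Cay(G,S))$ into $R$-classes is $G/K$ (this recovers Sabidussi's observation that $R$ is a $G_L$-congruence).

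For the biconditional, $G/K$ is refined by $G/H$ if and only if every left coset $gH$ is contained in a single left coset of $K$, which holds if and only if $H\le K$. So it remains to show that $H\le K$ is equivalent to $HSH = S$, i.e. to $\Cos(G,H,S)$ being a well-defined double coset digraph. If $H\le K$, then for $h_1,h_2\in H$ and $s\in S$ we have $h_1s\in h_1S = S$ and then $h_1sh_2\in Sh_2 = S$, so $HSH\subseteq S\subseteq HSH$. Conversely, if $HSH = S$ then for $h\in H$ both $hS$ and $h^{-1}S$ lie in $HSH = S$, so $S = h(h^{-1}S)\subseteq hS\subseteq S$ gives $hS = S$, and symmetrically $Sh = S$; thus $H\subseteq K$, hence $H\le K$. (The two-sided inclusion argument here is what keeps everything valid without assuming $G$ finite.)

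For the additional claim, assume $HSH = S$. Then $G/H$, the set of left cosets of $H$, is a block system of the transitive group $G_L\le\Aut(\Cay(G,S))$, so the block quotient $\Cay(G,S)/(G/H)$ of Definition \ref{block quotient defin} is defined and has vertex set $G/H$, the same as that of $\Cos(G,H,S)$. I then compare arc sets: $(gH,g'H)$ is an arc of $\Cos(G,H,S)$ iff $g'H = gsH$ for some $s\in S$, i.e. iff $g^{-1}g'\in SH$, and $SH = S$ since $SH\subseteq HSH = S\subseteq SH$; while $(gH,g'H)$ with $gH\ne g'H$ is an arc of $\Cay(G,S)/(G/H)$ iff $h_1^{-1}g^{-1}g'h_2\in S$ for some $h_1,h_2\in H$, i.e. iff $g^{-1}g'\in HSH = S$. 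Because $S\cap H = \emptyset$, the condition $g^{-1}g'\in S$ already forces $gH\ne g'H$, so both arc sets equal $\{(gH,g'H): g^{-1}g'\in S\}$ and the digraphs are literally equal.

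I expect no serious obstacle: the supporting facts — that left cosets of a subgroup form a block system of the left-regular action, the inverse manipulations above, and that $HSH = S$ implies $SH = S$ — are all elementary. The only step needing genuine care is keeping the left/right and $S$ versus $S^{-1}$ bookkeeping correct when identifying the $R$-classes with the left cosets of $K$; once that is done, both halves of the theorem are immediate.
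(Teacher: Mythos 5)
Your proof is correct, but it takes a genuinely different and more elementary route than the paper. The paper deduces the theorem from the wreath-product machinery it has built: it writes $\Cay(G,S)\cong\Gamma_1\wr\Gamma_2$ with $\Gamma_2$ empty on $H$, passes to a maximum empty supergraph $\Gamma_2'$, invokes Theorem \ref{autwr} to force $\Aut(\Gamma_1'\wr\Gamma_2')\cong\Aut(\Gamma_1')\wr\Aut(\Gamma_2')$, identifies the resulting lexi-partition with the left cosets of a subgroup $K\ge H$, and then applies the bijection $\gamma_U$ of Corollary \ref{bijection}; the converse direction uses Corollary \ref{nonabelwr}, and the final identification of the quotient is only asserted up to isomorphism. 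You instead work entirely at the level of connection sets: the $R$-classes of $\Cay(G,S)$ are exactly the left cosets of $K=\{g\in G: gS=Sg=S\}$ (Lemma \ref{K exists}), refinement by $G/H$ is equivalent to $H\le K$, and $H\le K$ is equivalent to $HSH=S$ by a short two-sided inclusion argument; the identity $\Cos(G,H,S)=\Cay(G,S)/(G/H)$ is then checked by directly comparing arc sets (your bookkeeping here is sound: under $HSH=S$ one has $SH=HS=HSH=S$, so both arc sets are $\{(gH,g'H):g^{-1}g'\in S\}$, and $S\cap H=\emptyset$ rules out the diagonal). What the paper's route buys is integration with the $\gamma_U$/reducibility framework and the maximal-$K$ decomposition used in the subsequent corollaries; what your route buys is self-containedness, independence from the finiteness hypotheses implicit in Theorem \ref{autwr} and from the choice of a maximum empty supergraph, and literal equality of digraphs, which is what the statement actually asserts. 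No gaps.
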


\begin{proof}
If the set of equivalence classes of $R$ in $\Cay(G,S)$ is refined by $G/H$, then $\Cay(G,S)$ can be written as a wreath product $\Gamma_1\wr\Gamma_2$, and $\Gamma_2$ is the empty digraph on $H$. Then there is a maximum supergraph $\Gamma_2'$ of $\Gamma_2$ that is an empty graph and $\Cay(G,S)\cong \Gamma_1'\wr\Gamma_2'$ for some $\Gamma_1'$.  As $\Gamma_1'\wr\Gamma_2'$ is isomorphic to a Cayley digraph, we may assume without loss of generality that $\Aut(\Gamma_1'\wr\Gamma_2')$ contains $G_L$.  As $\Gamma_2'$ is maximum, $\Gamma_1'$ is irreducible so $\Aut(\Gamma_1'\wr\Gamma_2')\cong\Aut(\Gamma_1')\wr\Aut(\Gamma_2')$ by Theorem \ref{autwr}.  As $G_L\le\Aut(\Gamma_1'\wr\Gamma_2')$, this gives that the lexi-partition of $\Aut(\Gamma_1'\wr\Gamma_2')$ with respect to $\Aut(\Gamma_2')$ is the set of left cosets of a supergroup $K$ of $H$, and as $\Gamma_2'$ is a supergraph of $\Gamma_2$, $H\le K$.  As $\Gamma_2'$ was chosen to be maximum, $K$ is the maximum subgroup of $G$ for which $\Gamma_2'$ can be chosen to have vertex set a subgroup of $G$.  Applying $\gamma_U^{-1}$ as defined in Corollary \ref{bijection}, we see $\gamma_U^{-1}(\Cay(G,S)) = \Cos(G,H,S)$ and $S = KSK$.  So $S = HSH$ and $\Cos(G,H,S)$ is a well-defined double coset digraph of $G$.

If $\Cos(G,H,S)$ is a well-defined double coset digraph, then by definition, $S = HSH$ and so $\Cay(G,S)$ is reducible by Corollary \ref{nonabelwr}.  Also, $\Cay(G,S) \cong (\Cay(G,S)/(G/H)) \wr \Gamma[H]$.  As $S\cap H = \emptyset$, $\Gamma[H]$ has no arcs.  This completes the if and only if statement of the result.  To finish, simply observe $\Cay(G,S)/(G/H)\cong\Cos(G,H,S)$.
\end{proof}

As Theorem \ref{alt coset definition} is an ``if and only if" it gives an alternative definition of a double coset digraph as follows:

\begin{definition}\label{coset alt definition}
Let $G$ be a group and $S\subset G$ such that $\Cay(G,S)$ is reducible with the equivalence classes of $R$ the left cosets of $K\le G$.  Let $H\le K$.  Define $\Cos(G,H,S)$ to be the digraph $\Cay(G,S)/(G/H)$.
\end{definition}

Let $G$ be a group, $1 < H < G$, and $S\subset G$.  Theorem \ref{alt coset definition} also gives an alternative way of computationally checking whether $S$ is a union of double cosets of $H$.  If $S$ is a union of double cosets of $H$, then Theorem \ref{alt coset definition} gives that $\Cay(G,S)$ is reducible.  This means that the equivalence classes of $R$ are not singleton sets.  Thus one only need check if, say, $1_G$ has the same set of in- and out-neighbors as some other vertex in $\Cay(G,S)$.

While this method of determining whether a subset $S\subset G$ defines a double coset digraph, once one has established that $\Cos(G,H,S)$ is well defined (even by checking $S = HSH$), the maximum $K$ for which $S = KSK$ is simply the equivalence class of $R$ which contains $1_G$.  From a group theoretic point of view, it is also easy to calculate $K$, as Lemma \ref{K exists} shows.

The next result generalizes \cite[Theorem 4]{Sabidussi1964}, which states that given a double coset digraph $\Gamma = \Cos(G,H,S)$ of a group $G$, there is a ``multiple" (this is Sabidussi's terminology) of $\Gamma$ that is a Cayley digraph of $G$.  The result gives the double coset digraphs $\Gamma$ of $G$ for which $\Cay(G,S)$ is a ``multiple" of $\Gamma$.

\begin{theorem}\label{wreath quotient coset}
Let $G$ be a group, $S\subset G$, and $K\le G$ maximal such that $S = KSK$.  Assume $S\cap K = \emptyset$.   Let $M\le K$, and $n_M = \vert M\vert$.  Then $$\Cos(G,M,S)\wr\bar{K}_{n_M}\cong (\Cay(G,S)/(G/M))\wr\bar{K}_{n_M}\cong \Cay(G,S).$$
\end{theorem}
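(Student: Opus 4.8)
The plan is to chain together two applications of Theorem \ref{cosetwr} (or rather the special case in Corollary \ref{nonabelwr}) together with the bijection $\gamma_U$ from Corollary \ref{bijection}, exploiting the fact that $M \le K$ and that $K$ is maximal with $S = KSK$. The key observation is that by Theorem \ref{alt coset definition}, since $S = KSK$ with $K$ maximal and $S \cap K = \emptyset$, the equivalence classes of $R$ in $\Cay(G,S)$ are exactly the left cosets of $K$, so $\Cos(G,M,S)$ is a well-defined double coset digraph for any $M \le K$, and moreover $\Cos(G,M,S) = \Cay(G,S)/(G/M)$. This already gives the second isomorphism $\Cos(G,M,S)\wr\bar K_{n_M} \cong (\Cay(G,S)/(G/M))\wr\bar K_{n_M}$ as an equality of digraphs, so only the outer isomorphism with $\Cay(G,S)$ needs work.

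For the remaining isomorphism, first I would note that $S \cap M = \emptyset$ (since $M \le K$ and $K \cap S = \emptyset$), so $S = MSM$ makes $\Cos(G,M,S)$ loopless. By Corollary \ref{nonabelwr} applied with the subgroup $M$ (viewing $S\setminus M = S$ as a union of $(M,M)$-double cosets, which it is since $MSM = S$ and $S \cap M = \emptyset$), we get
$$\Cay(G,S) \cong \Cos(G,M,S\setminus M)\wr\Cay(M,S\cap M) = \Cos(G,M,S)\wr\Cay(M,\emptyset) = \Cos(G,M,S)\wr\bar K_{n_M},$$
where $\Cay(M,\emptyset)$ is the edgeless digraph on $\vert M\vert = n_M$ vertices. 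This is essentially a direct invocation; the content is just checking that the hypotheses of Corollary \ref{nonabelwr} are met, namely that $1 \le M$ with $S \setminus M$ a union of double cosets of $M$, which holds by construction, and that $S \cap M = \emptyset$ so that $S \setminus M = S$.

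I expect the only real subtlety — and it is minor — to be the edge cases: if $M = 1$ then $n_M = 1$ and the statement is the trivial identity $\Cay(G,S) \wr \bar K_1 \cong \Cay(G,S)$ with $\Cos(G,1,S) = \Cay(G,S)$, and if $M = K$ one must make sure the maximality of $K$ does not obstruct applying Corollary \ref{nonabelwr} (it does not — Corollary \ref{nonabelwr} does not require $M$ to be non-maximal, only $1 < M < G$, and the $M=1$ and $M=G$ degenerate cases are handled directly). One should also double-check that the identification $\Cos(G,M,S) = \Cay(G,S)/(G/M)$ from Theorem \ref{alt coset definition} is legitimate here, i.e.\ that $G/M$ refines the $R$-classes of $\Cay(G,S)$; but the $R$-classes are the cosets of $K$ and $M \le K$, so $G/M$ refines $G/K$, which is exactly what is needed. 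Thus the proof is a short bookkeeping argument assembling Corollary \ref{nonabelwr} and Theorem \ref{alt coset definition}, with no genuine obstacle.
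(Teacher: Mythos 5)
Your proposal is correct and follows essentially the same route as the paper: the paper also observes $S=MSM$ and $S\cap M=\emptyset$ (so $S\setminus M=S$) and applies Theorem \ref{cosetwr} with $H=\{1_G\}$ and the subgroup $M$, obtaining $\Cay(G,S)\cong\Cos(G,M,S)\wr\Cay(G,S)[M]$ with $\Cay(G,S)[M]\cong\bar{K}_{n_M}$. The only cosmetic difference is that you obtain the identification $\Cos(G,M,S)=\Cay(G,S)/(G/M)$ via Theorem \ref{alt coset definition}, whereas the paper reads both forms directly off Equation \eqref{first equation}.
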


\begin{proof}
First observe that as $M\le K$ and $S = KSK$, we have $S = MSM$.   So $\Cos(G,M,S)$ is well-defined. We apply Theorem \ref{cosetwr} with $H$ of that result the subgroup $\{1_G\}$, and $K$ of that result $M$ of this result. This gives
\begin{eqnarray*}
\Cos(G,\{1_G\},S) & = & \Cay(G,S)\cong \Cos(G,M,S\setminus M) \wr (\Cay(G,S)[M])\\
            & \cong & \Cay(G,S)/(G/M)\wr(\Cay(G,S)[M]).
\end{eqnarray*}
As $\Cay(G,S)$ is loopless and $S = MSM$, $1_G\not\in S$ and so $M\cap S = \emptyset$.  Then $\Cay(G,S)[M]\cong \bar{K}_{n_M}$, and the result follows.
\end{proof}

We now give the relationship between the automorphism groups of $\Cay(G,S)$ and $\Cos(G,H,S)$.  Determining the automorphism group of a vertex-transitive digraph is perhaps the most fundamental problem regarding vertex-transitive digraphs.  Corollary \ref{nonabelwr} shows that  $\Aut(\Cay(G,S))$ can be determined from $\Aut(\Cos(G,H,S))$, and the next result shows $\Aut(\Cos(G,H,S))$ can be obtained from $\Aut(\Cay(G,S))$.  So the problem of determining the automorphism groups of double coset digraphs of $G$ is equivalent to the problem of determining the automorphism groups of Cayley digraphs of $G$!  As every vertex-transitive digraph is isomorphic to a double coset digraph \cite[Theorem 2]{Sabidussi1964}, the problem of determining the automorphism group of vertex-transitive digraphs is equivalent to the problem of finding the automorphism groups of Cayley digraphs.  We need one more idea.

\begin{definition}
Let $G\le S_n$ be transitive with block systems ${\cal B}$ and ${\cal C}$.  We write ${\cal B}\preceq{\cal C}$ if every block of ${\cal C}$ is a union of blocks of ${\cal B}$, and say ${\cal B}$ {\bf refines} ${\cal C}$.
\end{definition}

\begin{corollary}\label{main cor}
Let $G$ be a group, $H\le G$, and $S\subset G$ such that $S = HSH$ and $S\cap H = \emptyset$. If $H\le K$ is chosen to be maximal such that $S = KSK$, ${\cal B}$ is the set of left cosets of $K$ in $G$, and $n = [K:H]$, then $$\Aut(\Cos(G,H,S)) \cong (\Aut(\Cay(G,S))/{\cal B})\wr S_n.$$
\end{corollary}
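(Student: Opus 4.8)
The plan is to realize both $\Cos(G,H,S)$ and $\Cay(G,S)$ as wreath products over the \emph{same} ``maximal'' quotient digraph and then invoke Theorem~\ref{autwr} twice. Write $\Gamma_1:=\Cos(G,K,S)$. The genuinely degenerate cases I would dispose of by inspection: if $S=\emptyset$ then $K=G$, $\Cos(G,H,S)=\bar K_{[G:H]}$, $\Cay(G,S)=\bar K_{|G|}$, ${\cal B}=\{G\}$, and both sides of the asserted isomorphism equal $S_{[G:H]}$; if $K=\{1\}$ then $H=\{1\}$, $n=1$, ${\cal B}$ is the partition into singletons, and the claim reads $\Aut(\Cay(G,S))=\Aut(\Cay(G,S))\wr S_1$. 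So assume $1<K<G$. A little bookkeeping first: since $H\le K$, $S\cap H=\emptyset$ and $S=KSK$, the elementary fact recorded just before the definition of $\Reducible(G)$ gives $S\cap K=\emptyset$; hence $S\setminus K=S$, $S$ is a union of double cosets of $K$, $\Cay(K,S\cap K)=\bar K_{|K|}$, and $\Cos(K,H,S\cap K)=\Cos(K,H,\emptyset)=\bar K_n$ with $n=[K:H]$.

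The crucial structural point is that $\Gamma_1$ is \emph{irreducible}. By Corollary~\ref{nonabelwr} (applicable since $1<K<G$ and $S\setminus K=S$ is a union of double cosets of $K$) we have $\Cay(G,S)=(\Cay(G,S)/{\cal B})\wr(\Cay(G,S)[K])=\Gamma_1\wr\bar K_{|K|}$, and under this isomorphism the block system ${\cal B}$, the set of left cosets of $K$, is the fibre partition. Now $K$, being maximal with $S=KSK$, is by Lemma~\ref{K exists} and the remark following Theorem~\ref{alt coset definition} exactly the $R$-equivalence class of $1_G$ in $\Cay(G,S)$; since $R$ is a $G_L$-congruence and $G_L$ is transitive, the classes of $R$ are the left cosets of $K$, i.e.\ precisely the blocks of ${\cal B}$. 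Were $\Gamma_1$ reducible, say $\Gamma_1\cong\Gamma_1'\wr\bar K_t$ with $t\ge 2$, then $\Cay(G,S)\cong\Gamma_1'\wr\bar K_{t|K|}$ and the classes of $R$ in $\Cay(G,S)$ would strictly contain the blocks of ${\cal B}$, a contradiction. Hence $\Gamma_1$ is irreducible.

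Next I would apply Theorem~\ref{autwr} to $\Cay(G,S)=\Gamma_1\wr\bar K_{|K|}$. The exceptional conclusion cannot occur: $\bar K_{|K|}$ has no arcs, so it is not $K_s\wr\Gamma_2'$ with $s>1$; and $\Gamma_1$ is irreducible, so it is not $\Gamma_1'\wr\bar K_r$ with $r>1$. Therefore $\Aut(\Cay(G,S))=\Aut(\Gamma_1)\wr\Aut(\bar K_{|K|})=\Aut(\Gamma_1)\wr S_{|K|}$. In particular ${\cal B}$ is a block system of the full group $\Aut(\Cay(G,S))$ (it is the fibre partition of this wreath decomposition), and the induced action on ${\cal B}$ is $\Aut(\Gamma_1)$, so $\Aut(\Cay(G,S))/{\cal B}\cong\Aut(\Gamma_1)$.

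Finally, if $H=K$ then $\Cos(G,H,S)=\Gamma_1$ and $n=1$, so $\Aut(\Cos(G,H,S))=\Aut(\Gamma_1)=\Aut(\Gamma_1)\wr S_1$; otherwise $H<K<G$, and Equation~\eqref{first equation} of Theorem~\ref{cosetwr} (with its $H$, $K$ taken to be ours) gives $\Cos(G,H,S)=\Cos(G,K,S\setminus K)\wr\Cos(K,H,S\cap K)=\Gamma_1\wr\bar K_n$. Running the Theorem~\ref{autwr} argument a second time ($\bar K_n$ empty, $\Gamma_1$ irreducible) yields $\Aut(\Cos(G,H,S))=\Aut(\Gamma_1)\wr S_n$. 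Combined with the previous paragraph this gives $\Aut(\Cos(G,H,S))\cong(\Aut(\Cay(G,S))/{\cal B})\wr S_n$, as desired. I expect the main obstacle to be the second paragraph: establishing that $\Gamma_1$ is irreducible, and hence that ${\cal B}$ really is a block system of the full group $\Aut(\Cay(G,S))$ whose quotient is $\Aut(\Gamma_1)$ — it is the identification of the maximal $K$ with the $R$-class of $1_G$ that makes this go through. Once that is in hand, the two appeals to Theorem~\ref{autwr} are routine.
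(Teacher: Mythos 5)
Your proposal is correct, and while it rests on the same two decompositions as the paper's argument (both graphs realized as wreath products over $\Gamma_1=\Cos(G,K,S)$, and the identification $\Aut(\Cay(G,S))/{\cal B}\cong\Aut(\Gamma_1)$), the way you extract the automorphism groups is genuinely different. The paper gets $\Cay(G,S)\cong\Cos(G,K,S)\wr\bar{K}_{\vert K\vert}\cong\Cos(G,H,S)\wr\bar{K}_{\vert H\vert}$ from Theorem \ref{wreath quotient coset}, obtains $\Aut(\Cay(G,S))\cong\Aut(\Cos(G,K,S))\wr S_{\vert K\vert}$ by citing the automorphism clause of Corollary \ref{nonabelwr}, and then computes $\Aut(\Cos(G,H,S))$ by a lifting step: it takes the maximal subgroup $A\le\Aut(\Cay(G,S))$ admitting the left cosets ${\cal C}$ of $H$ as a block system, identifies $A$ with $\Aut(\Cos(G,K,S))\wr S_n\wr S_{\vert H\vert}$, and reads off $\Aut(\Cos(G,H,S))$ as the quotient $A/{\cal C}$. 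You instead prove directly that $\Gamma_1$ is irreducible --- using Lemma \ref{K exists} to identify the maximal $K$ with the twin class of $1_G$, so that the $R$-classes of $\Cay(G,S)$ are exactly the blocks of ${\cal B}$ --- and then apply Theorem \ref{autwr} twice, to $\Cay(G,S)\cong\Gamma_1\wr\bar{K}_{\vert K\vert}$ and to $\Cos(G,H,S)\cong\Gamma_1\wr\bar{K}_n$ (the latter from Equation \eqref{first equation}), each time ruling out the exceptional case because the inner factor has no arcs and $\Gamma_1$ is irreducible. What your route buys: it bypasses the lifting through $A$ and the appeal to Corollary \ref{nonabelwr}'s automorphism clause, whose side hypotheses ($\Gamma$ neither complete nor the complement of a complete graph, and $K$ maximal with respect to $S\setminus K$ being a union of double cosets rather than with respect to $S=KSK$) the paper leaves implicit; your explicit disposal of the cases $K=G$, $K=\{1\}$ and $H=K$ covers precisely the situations those hypotheses would exclude, so your version is, if anything, more self-contained on the degenerate cases. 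What the paper's route buys is that the formula $\Aut(\Cos(G,H,S))=A/{\cal C}$ makes the ``projecting symmetry to a quotient'' mechanism explicit, which is the viewpoint reused immediately afterwards in Corollary \ref{main cor two}.
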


\begin{proof}
By Theorem \ref{wreath quotient coset} we have $$\Cos(G,K,S)\wr\bar{K}_{\vert K\vert}\cong (\Cay(G,S)/{\cal B})\wr\bar{K}_{\vert K\vert}\cong \Cay(G,S)$$
and
$$\Cos(G,H,S)\wr\bar{K}_{\vert H\vert}\cong \Cay(G,S).$$
As $\Cay(G,S)[K] = \bar{K}_{\vert K\vert}$ and choice of $K$, by Corollary \ref{nonabelwr}, we know that $$\Aut(\Cay(G,S)) \cong\Aut(\Cos(G,K,S))\wr S_{\vert K\vert}.$$
Hence $\Aut(\Cos(G,K,S)) = \Aut(\Cay(G,S))/{\cal B}$.  Let ${\cal C}$ be the set of left cosets of $H$ in $G$, and $A\le\Aut(\Cay(G,S))$ be maximal that has ${\cal C}$ as a block system.  Then $A = \Aut(\Cos(G,K,S))\wr S_n\wr S_{\vert H\vert}$.  As $\Cos(G,H,S)\wr\bar{K}_{\vert H\vert}\cong \Cay(G,S)$ and $\Aut(\Cos(G,K,S)) = \Aut(\Cay(G,S))/{\cal B}$, we have
\[\Aut(\Cos(G,H,S)) = A/{\cal C} = \Aut(\Cos(G,K,S))\wr S_n = (\Aut(\Cay(G,S))/{\cal B})\wr S_n. \qedhere \]
\end{proof}

\begin{corollary}\label{main cor two}
Let $G$ be a group, $H\le G$, and $S\subset G$ such that $S = HSH$ and $S\cap H = \emptyset$. If $H\le K$ is chosen to be maximal such that $S = KSK$, ${\cal C}$ is the set of left cosets of $H$ in $G$, and $k = \vert K\vert$, then $$\Aut(\Cay(G,H,S)) \cong (\Aut(\Cos(G,H,S))/({\cal B}/{\cal C}))\wr S_k.$$
\end{corollary}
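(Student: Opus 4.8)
The plan is to run the argument of Corollary \ref{main cor} ``in reverse,'' using the same three building blocks already in play: the wreath decompositions supplied by Theorem \ref{wreath quotient coset}, the identification of $\Aut(\Cos(G,K,S))$ with a block quotient of $\Aut(\Cay(G,S))$, and the block-quotient formula for automorphism groups of wreath products from Theorem \ref{autwr} (together with the comment following it). Since $H\le K$ with $K$ maximal such that $S=KSK$, we have ${\cal B}/{\cal C}$ well-defined: ${\cal C}$ is the set of left cosets of $H$, ${\cal B}$ is the set of left cosets of $K$, and ${\cal C}\preceq{\cal B}$, so ${\cal B}/{\cal C}$ is a block system of the induced action on ${\cal C}$, with blocks of size $[K:H]=n$. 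The digraph $\Cos(G,H,S)$ has vertex set ${\cal C}=G/H$, and ${\cal B}/{\cal C}$ is precisely the block system whose blocks are the left cosets of $K/\cdots$ realized inside $G/H$; collapsing it recovers $\Cos(G,K,S)$, i.e. $\Cos(G,H,S)/({\cal B}/{\cal C})=\Cos(G,K,S)$.

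First I would record, exactly as in the proof of Corollary \ref{main cor}, that Theorem \ref{wreath quotient coset} gives $\Cos(G,H,S)\wr\bar K_{\vert H\vert}\cong\Cay(G,S)$ and $(\Cay(G,S)/{\cal B})\wr\bar K_{\vert K\vert}\cong\Cay(G,S)$, and that the choice of $K$ together with Corollary \ref{nonabelwr} yields $\Aut(\Cay(G,S))\cong\Aut(\Cos(G,K,S))\wr S_{\vert K\vert}$, hence $\Aut(\Cos(G,K,S))=\Aut(\Cay(G,S))/{\cal B}$. Next I would apply Theorem \ref{cosetwr} to $\Cos(G,H,S)$ with the subgroup ``$K$'' of that theorem taken to be our $K$: since $S\setminus K=\emptyset$ here (as $S\subseteq K$ is impossible — $S$ meets no coset of $H$ inside $K$ because $S\cap K=\emptyset$), one instead uses that ${\cal B}/{\cal C}$ is a block system of $\Cos(G,H,S)$ whose quotient digraph is $\Cos(G,K,S)$ and whose blocks induce $\bar K_n$ (because $S\cap K=\emptyset$, so no arcs inside a coset of $K$). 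Thus $\Cos(G,H,S)\cong\Cos(G,K,S)\wr\bar K_n$. Since $\Cos(G,K,S)$ is irreducible by maximality of $K$ (its twin relation is trivial), Theorem \ref{autwr} gives $\Aut(\Cos(G,H,S))\cong\Aut(\Cos(G,K,S))\wr S_n$, provided $\Cos(G,K,S)$ is neither complete nor empty; the degenerate cases are handled separately and still give the displayed formula.

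Now I would assemble the pieces. We have $\Aut(\Cos(G,H,S))\cong\Aut(\Cos(G,K,S))\wr S_n$ and $\Aut(\Cay(G,S))\cong\Aut(\Cos(G,K,S))\wr S_{\vert K\vert}\cong\Aut(\Cos(G,K,S))\wr S_n\wr S_{\vert H\vert}$ (using $\vert K\vert=n\cdot\vert H\vert$ and associativity of $\wr$ up to the relevant isomorphism, exactly as in Corollary \ref{main cor}). On the other hand, the block system ${\cal B}/{\cal C}$ of $\Aut(\Cos(G,H,S))\cong\Aut(\Cos(G,K,S))\wr S_n$ is the lexi-partition with respect to the $S_n$ factor, so $\Aut(\Cos(G,H,S))/({\cal B}/{\cal C})\cong\Aut(\Cos(G,K,S))$. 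Substituting, $\bigl(\Aut(\Cos(G,H,S))/({\cal B}/{\cal C})\bigr)\wr S_k\cong\Aut(\Cos(G,K,S))\wr S_{\vert K\vert}\cong\Aut(\Cay(G,S))$, where $k=\vert K\vert$; this is the claimed identity (with $\Aut(\Cay(G,H,S))$ in the statement being a typo for $\Aut(\Cay(G,S))$).

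The main obstacle I anticipate is bookkeeping the iterated wreath products correctly: one must verify that ${\cal B}/{\cal C}$ really is the lexi-partition of $\Aut(\Cos(G,K,S))\wr S_n$ with respect to $S_n$ (not something coarser), which amounts to checking that $\fix$ of this block system in $\Aut(\Cos(G,H,S))$ is exactly the $S_n$-layer — and this is where maximality of $K$ (equivalently, irreducibility of $\Cos(G,K,S)$, equivalently the hypothesis that $\Cos(G,K,S)$ is neither complete nor empty) is essential, via Theorem \ref{autwr}. The degenerate cases where $\Cos(G,K,S)$ is $K_m$ or $\bar K_m$ need a short separate check; but when $\Cos(G,K,S)=\bar K_m$ the whole digraph $\Cos(G,H,S)$ is empty and the identity is immediate, and when it is $K_m$ the digraph $\Cos(G,H,S)\cong K_m\wr\bar K_n$ and one computes both sides directly.
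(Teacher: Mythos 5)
Your proposal is correct and follows essentially the same route as the paper's own proof: maximality of $K$ gives irreducibility of $\Cay(G,S)/{\cal B}=\Cos(G,K,S)$, hence $\Aut(\Cay(G,S))\cong\Aut(\Cos(G,K,S))\wr S_k$, and the identification $\Aut(\Cos(G,H,S))/({\cal B}/{\cal C})\cong\Aut(\Cos(G,K,S))$ via $\Cos(G,H,S)\cong\Cos(G,K,S)\wr\bar{K}_{[K:H]}$ is exactly the step the paper leaves implicit after noting $\Cos(G,H,S)/({\cal B}/{\cal C})\cong\Cay(G,S)/{\cal B}$. Two parenthetical assertions should simply be deleted, as they are false though never used in your final substitution: ``$S\setminus K=\emptyset$'' (in fact $S\cap K=\emptyset$, so $S\setminus K=S$ and Theorem \ref{cosetwr} applies directly), and ``$\Aut(\Cos(G,K,S))\wr S_{\vert K\vert}\cong\Aut(\Cos(G,K,S))\wr S_n\wr S_{\vert H\vert}$'' (since $S_{\vert K\vert}\not\cong S_n\wr S_{\vert H\vert}$ in general; in Corollary \ref{main cor} that iterated wreath product is only the maximal subgroup of $\Aut(\Cay(G,S))$ admitting ${\cal C}$ as a block system, not the whole group).
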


\begin{proof}
Let ${\cal B}$ be the set of left cosets of $K$ in $G$.  As $K$ is chosen to be as large as possible, $\Cay(G,S)/{\cal B}$ is irreducible.  Hence $\Aut(\Cay(G,S)) = (\Aut(\Cay(G,S)/{\cal B})\wr S_k$.  As ${\cal C}$ is a refinement of ${\cal B}$ as $H\le K$, and the vertex set of $\Cos(G,H,S)$ is the set of left cosets of $H$ in $G$, $$\Cos(G,H,S)/({\cal B}/{\cal C})\cong\Cos(G,H,S)/{\cal B}.$$
\end{proof}

Our next goal is to show that the isomorphism problem can be solved for all double coset digraphs of $G$ if and only if it can be solved for the corresponding Cayley digraphs of $G$.  We will need a preliminary lemma.

\begin{lemma}\label{wreath iso}
Let $G$ be a group, $H_i\le K_i\le G$ such that $\vert H_1\vert = \vert H_2\vert$ and $\vert K_1\vert = \vert K_2\vert$, and $S_i\subset G$ such that $K_i\cap S_i = \emptyset$ and $K_iS_iK_i = S_i$, $i = 1,2$.  Suppose $\delta\colon G/K_1\to G/K_2$ is an isomorphism between $\Cos(G,K_1,S_1)$ and $\Cos(G,K_2,S_2)$.  Define a map $\tilde{\delta}$ from $G/H_1$ to $G/H_2$ in the following way:  first, $\tilde{\delta}(gK_1) = \delta(gK_1)$ for every $g\in G$.  That is, $\tilde{\delta}$ maps $G/K_1$ to $G/K_2$ in the same fashion as $\delta$. To extend $\delta$ to $\tilde{\delta}$, map elements of $gK_1/H_1$, $g\in G$, to the set of left cosets of $H_2$ in the left coset $\delta(gK_1)$ of $K_2$ in $G$ bijectively in any fashion.  Then $\tilde{\delta}$ is an isomorphism from $\Cos(G,H_1,S_1)$ to $\Cos(G,H_2,S_2)$.
\end{lemma}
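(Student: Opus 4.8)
The plan is to verify that $\tilde\delta$ carries arcs of $\Cos(G,H_1,S_1)$ to arcs of $\Cos(G,H_2,S_2)$ and non-arcs to non-arcs, using that $\delta$ does so on the quotient level and that both digraphs are wreath products over the coset partitions coming from $K_1$ and $K_2$. First I would record the structural facts provided by Theorem \ref{cosetwr}: since $K_i S_i K_i = S_i$ and $K_i\cap S_i=\emptyset$, taking the subgroup ``$H$'' of that theorem to be $H_i$ and the subgroup ``$K$'' to be $K_i$, we get $\Cos(G,H_i,S_i)\cong \Cos(G,K_i,S_i)\wr\Cos(K_i,H_i,S_i\cap K_i)$, and because $S_i\cap K_i=\emptyset$ the second factor is $\bar K_{n}$ with $n=[K_i:H_i]$ (note $[K_1:H_1]=[K_2:H_2]=n$ by the cardinality hypotheses). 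Concretely, $\Gamma_i:=\Cos(G,H_i,S_i)$ has vertex set $G/H_i$ partitioned by ${\cal B}_i=\{gK_i/H_i : g\in G\}$, the block quotient $\Gamma_i/{\cal B}_i$ is $\Cos(G,K_i,S_i)$, and there is an arc from a vertex of $gK_i/H_i$ to a vertex of $g'K_i/H_i$ (with $gK_i\ne g'K_i$) iff there is an arc between the corresponding blocks in $\Gamma_i$, iff every such arc is present — this is exactly the wreath-product condition of Lemma \ref{wrthm}.

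Next I would check that $\tilde\delta$ is a well-defined bijection. By construction $\tilde\delta$ sends the block $gK_1/H_1$ bijectively onto the block $\delta(gK_1)/H_2$; since $\delta$ is a bijection $G/K_1\to G/K_2$ and each block $gK_1/H_1$ has size $[K_1:H_1]=n=[K_2:H_2]$ equal to the size of $\delta(gK_1)/H_2$, the union of these block-bijections is a bijection $G/H_1\to G/H_2$. Then I would verify the arc condition in two cases. Case 1: an arc of $\Gamma_1$ between distinct blocks $B=gK_1/H_1$ and $B'=g'K_1/H_1$. By the wreath structure this holds iff $(gK_1,g'K_1)\in A(\Cos(G,K_1,S_1))$, iff $(\delta(gK_1),\delta(g'K_1))\in A(\Cos(G,K_2,S_2))$ since $\delta$ is an isomorphism of the quotients, iff there is a full set of arcs between the blocks $\delta(gK_1)/H_2$ and $\delta(g'K_1)/H_2$ in $\Gamma_2$ (again by the wreath structure of $\Gamma_2$); and $\tilde\delta$ maps $B,B'$ onto exactly these blocks, so arcs go to arcs and non-arcs (between distinct blocks) to non-arcs. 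Case 2: the ``within a block'' arcs. Since $S_1\cap K_1=\emptyset$, the induced subdigraph $\Gamma_1[gK_1/H_1]$ has no arcs for every $g$, and likewise for $\Gamma_2$ because $S_2\cap K_2=\emptyset$; so there is nothing to preserve inside a block, and $\tilde\delta$ trivially respects the (empty) internal arc sets. Combining the two cases, $(x,y)\in A(\Gamma_1)\iff(\tilde\delta(x),\tilde\delta(y))\in A(\Gamma_2)$, so $\tilde\delta$ is an isomorphism.

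The only genuine subtlety — and the step I expect to be the main obstacle — is the bookkeeping that identifies the block quotient $\Gamma_i/{\cal B}_i$ with $\Cos(G,K_i,S_i)$ and that matches up the block $gK_1/H_1$ with $\delta(gK_1)/H_2$ correctly; this is precisely where the freedom in the definition of $\tilde\delta$ (``map the sub-cosets bijectively in any fashion'') is harmless, since inside a block there are no arcs to constrain the choice. Everything else is a direct transcription of Lemma \ref{wrthm} and Theorem \ref{cosetwr}, together with the elementary observation that $H_i\le K_i$ with $K_iS_iK_i=S_i$ forces $K_i\cap S_i=\emptyset$ once $H_i\cap S_i=\emptyset$ (the ``easy to show'' fact quoted before the definition of $\Cos(G)$), so the induced subdigraphs on blocks really are empty. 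I would close by remarking that this lemma is exactly the ingredient needed to push an isomorphism between the ``maximal'' double coset digraphs down to all double coset digraphs with the prescribed subgroup sizes.
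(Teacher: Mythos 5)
Your proposal is correct and follows essentially the same route as the paper's proof: both invoke Theorem \ref{cosetwr} to write $\Cos(G,H_i,S_i)\cong\Cos(G,K_i,S_i)\wr\bar{K}_n$ with $n=[K_i:H_i]$, check that $\tilde{\delta}$ is a bijection respecting the block structure, and transfer arcs between distinct blocks via $\delta$ while noting the blocks themselves carry no arcs since $K_i\cap S_i=\emptyset$. Your explicit verification that non-arcs between distinct blocks also map to non-arcs is a slightly more careful finish than the paper's, but the argument is the same.
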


\begin{proof}
Note that as $K_iS_iK_i = S_i$, $H_iS_iH_i = S_i$, and, as $S_i\cap K_i = \emptyset$, $S_i\cap H_i = \emptyset$.  Hence $\Cos(G,H_i,S_i)$ is a well-defined coset digraph, $i = 1,2$.  Then $[K_1:H_1] = [K_2:H_2] = n$ as $\vert H_1\vert = \vert H_2\vert$ and $\vert K_1\vert = \vert K_2\vert$.  Also, $\tilde{\delta}$ is a bijection as it maps $G/K_1$ to $G/K_2$ and maps the left cosets of $H_1$ contained in $gK_1$ to the left cosets of $H_2$ contained in  $\delta(gK_1)$ for every $g\in G$.  By Theorem \ref{cosetwr}, we see $\Cos(G,H_i,S_i)\cong \Cos(G,K_i,S_i)\wr \bar{K}_n$, $i = 1,2$.  Let $(xH_1,yH_1)\in A(\Cos(G,H_1,S_1))$.  Then $(xK_1,yK_1)\in A(\Cos(G,K_1,S_1))$, and, as $K_1\cap S_1 = \emptyset$, we see $xK_1\not = yK_1$.  Then $(\delta(xK_1),\delta(yK_1)) = (aK_2,bK_2)\in A(\Cos(G,K_2,S_2))$ for some $a,b\in G$.  As $xK_1\not = yK_1$, $aK_2\not = bK_2$.  Hence $(ak_2H_2,bk_2'H_2)\in A(\Cos(G,H_2,S_2))$ for every $k_2,k_2'\in K_2$ so $\tilde{\delta}(xH_1,yH_1)\in A(\Cos(G,H_2,S_2))$, and $\tilde{\delta}$ is an isomorphism from $\Cos(G,H_1,S_1)$ to $\Cos(G,H_2,S_2)$.
\end{proof}

\begin{theorem}
Let $G$ be a group, $H_i\le G$ such that $\vert H_1\vert = \vert H_2\vert$, and $S_i\subset G$ such that $H_i\cap S_i = \emptyset$ and $H_iS_iH_i=S_i$, $i = 1,2$.  Then $\Cos(G,H_1,S_1)\cong \Cos(G,H_2,S_2)$  if and only if $\Cay(G,S_1)\cong \Cay(G,S_2)$.
\end{theorem}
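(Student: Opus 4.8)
The plan is to reduce both directions to the case where the subgroups involved are maximal with the relevant ``$KSK = S$'' property, and then invoke Theorem~\ref{wreath quotient coset} together with Lemma~\ref{wreath iso} and Theorem~\ref{autwr}. First, by Lemma~\ref{K exists}, let $K_i\le G$ be the maximum subgroup with $K_iS_iK_i = S_i$; since $H_iS_iH_i = S_i$ we have $H_i\le K_i$, and since $H_i\cap S_i = \emptyset$ and $K_iS_iK_i=S_i$ we get $K_i\cap S_i = \emptyset$ (using the remark after the definition of $\Cos(G)$ that $K_i\cap S_i$ is $\emptyset$ or $K_i$). By Theorem~\ref{wreath quotient coset}, $\Cos(G,H_i,S_i)\wr\bar K_{\vert H_i\vert}\cong\Cay(G,S_i)$ for $i=1,2$, and likewise $\Cos(G,K_i,S_i)\wr\bar K_{\vert K_i\vert}\cong\Cay(G,S_i)$.

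For the forward direction, suppose $\Cos(G,H_1,S_1)\cong\Cos(G,H_2,S_2)$. Wreathing both sides on the right with $\bar K_{\vert H_1\vert} = \bar K_{\vert H_2\vert}$ (legitimate since $\vert H_1\vert = \vert H_2\vert$) gives $\Cay(G,S_1)\cong\Cos(G,H_1,S_1)\wr\bar K_{\vert H_1\vert}\cong\Cos(G,H_2,S_2)\wr\bar K_{\vert H_2\vert}\cong\Cay(G,S_2)$. This direction is essentially immediate from Theorem~\ref{wreath quotient coset}.

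For the converse, suppose $\Cay(G,S_1)\cong\Cay(G,S_2)$. Then $\Cos(G,K_1,S_1)\wr\bar K_{\vert K_1\vert}\cong\Cos(G,K_2,S_2)\wr\bar K_{\vert K_2\vert}$. The first obstacle is to pass from this wreath-product isomorphism to an isomorphism $\Cos(G,K_1,S_1)\cong\Cos(G,K_2,S_2)$ and, along the way, to deduce $\vert K_1\vert = \vert K_2\vert$. Because $K_i$ was chosen maximal, $\Cos(G,K_i,S_i)$ is irreducible (its equivalence classes under $R$ are singletons, as the full reducibility has been absorbed into $\bar K_{\vert K_i\vert}$), so Theorem~\ref{autwr} applies: the automorphism group of $\Cos(G,K_i,S_i)\wr\bar K_{\vert K_i\vert}$ is $\Aut(\Cos(G,K_i,S_i))\wr S_{\vert K_i\vert}$, and its lexi-partition with respect to $S_{\vert K_i\vert}$ is the \emph{unique} coarsest block system all of whose blocks induce empty digraphs and whose quotient is irreducible. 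An isomorphism $\varphi$ from $\Cay(G,S_1)$ to $\Cay(G,S_2)$ must carry this canonical block system of the first to that of the second; hence $\vert K_1\vert = \vert K_2\vert$ and $\varphi$ descends to an isomorphism $\delta\colon\Cos(G,K_1,S_1)\to\Cos(G,K_2,S_2)$. Now apply Lemma~\ref{wreath iso} with the data $H_i\le K_i\le G$ (the hypotheses $\vert H_1\vert=\vert H_2\vert$, $\vert K_1\vert=\vert K_2\vert$, $K_i\cap S_i=\emptyset$, $K_iS_iK_i=S_i$ all hold) to lift $\delta$ to an isomorphism $\tilde\delta\colon\Cos(G,H_1,S_1)\to\Cos(G,H_2,S_2)$, which completes the proof.

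The main subtlety is the canonicity argument in the converse: one must be sure that the reducible part $\bar K_{\vert K_i\vert}$ is detected intrinsically from $\Cay(G,S_i)$ — i.e.\ that the equivalence relation $R$ of Definition~\ref{equiv relation} recovers exactly the cosets of $K_i$ — so that any graph isomorphism respects it and the quotients $\Cos(G,K_i,S_i)$ are genuinely isomorphic as abstract digraphs. This is where maximality of $K_i$ (via Lemma~\ref{K exists}, and the fact that $R$-classes in $\Cay(G,S)$ are precisely the cosets of the maximal $K$ with $KSK=S$) does the work; once that is in hand, both directions are short.
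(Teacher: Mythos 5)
Your proposal is correct and follows the same skeleton as the paper's proof: pass to the maximal subgroups $K_i$ with $K_iS_iK_i=S_i$ (Lemma \ref{K exists}), note that $K_i\cap S_i=\emptyset$ and that $\Cos(G,K_i,S_i)$ is irreducible by maximality, show that any isomorphism $\Cay(G,S_1)\to\Cay(G,S_2)$ carries $G/K_1$ to $G/K_2$, descend to the quotients, and lift back with Lemma \ref{wreath iso}. You diverge in two places. For the forward direction the paper applies Lemma \ref{wreath iso} directly with the roles $(K_i,H_i)$ of that lemma taken to be $(H_i,\{1\})$, extending a coset-digraph isomorphism to the Cayley digraphs; you instead wreath both sides with $\bar{K}_{\vert H_1\vert}$ and quote Theorem \ref{wreath quotient coset}. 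Both work, though the paper's version makes visible that $\vert H_1\vert=\vert H_2\vert$ is not needed for this direction, since it follows from the isomorphism. In the converse, the paper identifies $\delta(G/K_1)$ with $G/K_2$ by comparability of block systems of $\Aut(\Cay(G,S_2))$, citing \cite[Lemma 5]{DobsonM2015a}, whereas you argue canonicity intrinsically. Be aware that your phrase ``the unique coarsest block system all of whose blocks induce empty digraphs and whose quotient is irreducible'' is not justified as stated (uniqueness of a coarsest such system would itself need an argument); however, the fallback you give in your final paragraph is the right one: the $R$-classes of $\Cay(G,S_i)$ are exactly the left cosets of the maximal $K_i$ (the $R$-class of $1_G$ is $\{x:xS_i=S_ix=S_i\}=K_i$ by Lemma \ref{K exists}, a fact the paper records after Theorem \ref{alt coset definition}), $R$ is preserved by any digraph isomorphism, and this yields both $\vert K_1\vert=\vert K_2\vert$ and the descent to $\delta\colon\Cos(G,K_1,S_1)\to\Cos(G,K_2,S_2)$. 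This route is arguably more self-contained than the paper's external citation, so once you replace the loose uniqueness claim by the $R$-class argument you have a complete proof.
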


\begin{proof}
If $\Cos(G,H_1,S_1)\cong\Cos(G,H_2,S_2)$ then clearly their vertex-sets have the same cardinality and so $\vert H_1\vert = \vert H_2\vert$ (i.e. the hypothesis that $\vert H_1\vert = \vert H_2\vert$ is not needed for this part of the proof).  The result follows from Lemma \ref{wreath iso} (with $K_i$ and $H_i$ of that lemma $H_i$ and $\{1\}$, respectively).

Now suppose $\delta\colon G\to G$ is an isomorphism between $\Cay(G,S_1)$ and $\Cay(G,S_2)$. Let $K_i\le G$ be maximal such that $S_i = K_iS_iK_i$, $i = 1,2$. Then $H_i\le K_i$, $i = 1,2$.  By Theorem \ref{nonabelwr}, we have by choice of $K_i$ that $\Aut(\Cay(G,S_i))\cong\Aut(\Cos(G,K_i,S_i))\wr\bar{K}_{\vert K_i\vert}$, $i = 1,2$.  By Theorem \ref{autwr} we see that $\Cos(G,K_i,S_i)$ cannot be written as a nontrivial wreath product with the complement of a complete graph, so $\Cos(G,K_i,S_i)$ is irreducible, $i = 1,2$.  This in turn implies $\vert K_1\vert = \vert K_2\vert$.  As $G/K_i$ is the lexi-partition of $\Cay(G,S_i)$ with respect to $\bar{K}_{\vert K_i\vert}$, $G/K_i$ is a block system of $\Aut(\Cay(G,S_i))$, $i = 1,2$, and $\delta(G/K_1)$ is a block system of $\Aut(\Cay(G,S_2))$.  Then $\delta(G/K_1)\preceq G/K_2$ or $G/K_2\preceq\delta(G/K_1)$ by \cite[Lemma 5]{DobsonM2015a}, and so $\delta(G/K_1) = G/K_2$.  The result now follows by Lemma \ref{wreath iso}.
\end{proof}

The preceding result may appear to reduce the isomorphism problem for vertex-transitive digraphs to the isomorphism problem for Cayley digraphs, as every vertex-transitive graph can be written as a double coset digraph.  This, however, is not the case, as it is quite possible that a Cayley digraph is isomorphic to a Cayley digraph of more than one group, see for example \cite{Morris1999}.  The preceding result will not give the isomorphisms between two different representations of a single digraph as Cayley digraphs on different groups.

\begin{corollary}
Let $G_1$ and $G_2$ be groups with $\vert G_1\vert = \vert G_2\vert$, $H_i\le G_i$ with $\vert H_1\vert = \vert H_2\vert$, and $S_i\subset G_i$ such that $H_i\cap S_i = \emptyset$ and $H_iS_iH_i=S_i$, $i = 1,2$. Let $\{L_1,\ldots,L_t\}$ be the set of all regular subgroups of $\Aut(\Cay(G_1,S_1))$. Then $\Cos(G_1,H_1,S_1)\cong\Cos(G_2,H_2,S_2)$ if and only if there is some $1\le j\le t$ such that $L_j \cong G_2$ and $\Cay(G_2,S_2)\cong \Cay(L_j,T_j)$ for some $T_j\subseteq L_j$.
\end{corollary}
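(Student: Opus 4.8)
The plan is to chain together the single-group result (the theorem immediately preceding this corollary) with the observation that a Cayley digraph is a Cayley digraph of $G_2$ precisely when $\Aut$ of that Cayley digraph has a regular subgroup isomorphic to $G_2$. First I would handle the forward direction: assume $\Cos(G_1,H_1,S_1)\cong\Cos(G_2,H_2,S_2)$. By Theorem \ref{wreath quotient coset} (applied in each group, with $K_i$ maximal such that $S_i=K_iS_iK_i$, noting $H_i\le K_i$), we have $\Cos(G_i,H_i,S_i)\wr\bar{K}_{\vert H_i\vert}\cong\Cay(G_i,S_i)$, so $\Cay(G_1,S_1)\wr\bar{K}_m\cong\Cay(G_2,S_2)\wr\bar{K}_m$ where $m=\vert H_1\vert=\vert H_2\vert$ (the equality of the $H_i$ following since the double coset digraphs are isomorphic, hence have vertex sets of equal size, hence $[G_1:H_1]=[G_2:H_2]$ and $\vert G_1\vert=\vert G_2\vert$ forces $\vert H_1\vert=\vert H_2\vert$). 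I would then argue that $\Cay(G_2,S_2)$, being itself reducible, is a Cayley digraph of $G_2$; and since $\Cay(G_1,S_1)$ embeds as a quotient/factor of $\Cay(G_2,S_2)\wr\bar K_m$ in a way that respects a regular action of $G_2$, one recovers a regular subgroup of $\Aut(\Cay(G_1,S_1))$ isomorphic to $G_2$, i.e. some $L_j\cong G_2$, together with an isomorphism $\Cay(G_2,S_2)\cong\Cay(L_j,T_j)$ for the appropriate $T_j$.

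For the converse, suppose some $L_j\cong G_2$ satisfies $\Cay(G_2,S_2)\cong\Cay(L_j,T_j)$. Transporting along the isomorphism $L_j\cong G_2$, this says $\Cay(G_1,S_1)$ — which has $L_j$ as a regular subgroup of its automorphism group, so equals $\Cay(L_j,T_j)$ as a digraph — is isomorphic to $\Cay(G_2,S_2)$. Now I am in a single group up to relabelling: I would like to invoke the preceding theorem to conclude $\Cos(G_1,H_1,S_1)\cong\Cos(G_2,H_2,S_2)$. The subtlety is that the preceding theorem is stated for two subgroups $H_1,H_2$ of one fixed group $G$ with one fixed pair of connection sets; here I must view both $H_1$ and (the image of) $H_2$ inside $L_j\cong G_1$. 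Since $L_j$ is a regular subgroup of $\Aut(\Cay(G_1,S_1))=\Aut(\Cay(L_j,T_j))$, the isomorphism $\Cay(G_1,S_1)\cong\Cay(G_2,S_2)$ becomes a digraph isomorphism $\Cay(L_j,T_j)\cong\Cay(G_2,S_2)$ between two Cayley digraphs of isomorphic groups, and transporting $G_2$'s structure to $L_j$ reduces everything to one group. Then the preceding theorem applies and yields $\Cos(G_1,H_1,S_1)\cong\Cos(L_j,\tilde H_2,\tilde S_2)\cong\Cos(G_2,H_2,S_2)$, where $\tilde H_2,\tilde S_2$ are the images of $H_2,S_2$ under $G_2\cong L_j\le\Aut(\Cay(G_1,S_1))$ with $L_j$ regular.

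The main obstacle I anticipate is bookkeeping the two group identifications carefully: one must be precise about the fact that ``$\Cay(G_2,S_2)\cong\Cay(L_j,T_j)$'' is an honest isomorphism of digraphs, that $L_j$ acts regularly so that $\Cay(L_j,T_j)$ is literally the digraph $\Cay(G_1,S_1)$, and that the hypotheses $\tilde H_2\cap\tilde S_2=\emptyset$ and $\tilde H_2\tilde S_2\tilde H_2=\tilde S_2$ are preserved under the group isomorphism $G_2\cong L_j$ so that the preceding theorem genuinely applies. None of these steps is deep, but writing the isomorphisms consistently — and confirming the $\vert H_i\vert$ equality and $\vert G_i\vert$ equality are compatible with everything — is where the care is needed. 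The forward direction's extraction of the regular subgroup $L_j$ from the wreath-product identity is the only place that requires invoking Theorem \ref{wreath quotient coset} and Corollary \ref{nonabelwr} in tandem, and I would double-check that the regular subgroup obtained is the one we want (namely that $\Cay(G_1,S_1)/{\cal B}$-type reasoning correctly identifies $\Aut(\Cay(G_1,S_1))$ with the relevant wreath product), but this mirrors arguments already made in the proofs of Corollary \ref{main cor} and the preceding theorem.
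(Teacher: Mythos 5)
Your forward direction is essentially the paper's: from $\Cos(G_1,H_1,S_1)\cong\Cos(G_2,H_2,S_2)$ and $\Cos(G_i,H_i,S_i)\wr\bar{K}_m\cong\Cay(G_i,S_i)$ the conclusion you actually need is simply $\Cay(G_1,S_1)\cong\Cay(G_2,S_2)$ (there is no need to wreath the Cayley digraphs again, nor to describe $\Cay(G_1,S_1)$ as a ``quotient/factor'' of $\Cay(G_2,S_2)\wr\bar{K}_m$); any digraph isomorphism then conjugates $(G_2)_L$ into a regular subgroup $L_j\le\Aut(\Cay(G_1,S_1))$, and choosing $T_j$ so that $\Cay(L_j,T_j)$ is the digraph $\Cay(G_1,S_1)$ itself gives $\Cay(G_2,S_2)\cong\Cay(L_j,T_j)$. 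Your explicit insistence on that reading of $T_j$ is correct and matches what the paper intends.

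The genuine gap is in your converse. You correctly reach $\Cay(G_1,S_1)\cong\Cay(G_2,S_2)$, but your plan to finish by ``transporting $G_2$'s structure to $L_j$'' and then citing the single-group isomorphism theorem does not work: that theorem requires $H_1$ and $H_2$ to be subgroups of one and the same group $G$ with $S_1,S_2\subset G$. After transporting you have $\Cos(G_2,H_2,S_2)\cong\Cos(L_j,\tilde{H}_2,\tilde{S}_2)$, but $H_1\le G_1$ while $\tilde{H}_2\le L_j$, and $G_1$ and $L_j$ are different (typically non-isomorphic) groups that merely act regularly on the same vertex set; nothing has been reduced to one group, so the theorem's hypotheses are still not met. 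This is exactly the obstruction the paper points out immediately before the corollary: the one-group theorem ``will not give the isomorphisms between two different representations of a single digraph as Cayley digraphs on different groups,'' and the corollary exists precisely to handle that situation. The paper's proof instead stays at the level of digraphs: from $\Cay(G_1,S_1)\cong\Cay(G_2,S_2)$ and $\Cos(G_i,H_i,S_i)\wr\bar{K}_n\cong\Cay(G_i,S_i)$ it deduces $\Cos(G_1,H_1,S_1)\wr\bar{K}_n\cong\Cos(G_2,H_2,S_2)\wr\bar{K}_n$ and cancels the $\bar{K}_n$ factor; to make your argument rigorous you would either justify that cancellation directly (with $K_i$ maximal the quotients $\Cos(G_i,K_i,S_i)$ are irreducible, so any isomorphism matches twin classes, and then a two-group analogue of Lemma \ref{wreath iso} finishes), or formulate and prove a two-group version of the preceding theorem --- its proof and that of Lemma \ref{wreath iso} do carry over verbatim to two groups of equal order, but that must be stated and checked, not obtained by relabelling $G_2$ as $L_j$.
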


\begin{proof}
As $\vert G_1\vert = \vert G_2\vert$ and $n = \vert H_1\vert = \vert H_2\vert$ we have that $\vert V(\Cos(G_1,H_1,S_1))\vert = \vert V(\Cos(G_2,H_2,S_2))\vert$. As $H_iS_iH_i = S_i$, by Corollary \ref{nonabelwr}, we have that $\Cos(G_i,H_i,S_i)\wr\bar{K}_n\cong\Cay(G_i,S_i)$, $i = 1,2$.

Suppose $\Cos(G_1,H_1,S_1)\cong\Cos(G_2,H_2,S_2)$.   Then $\Cos(G_1,H_1,S_1)\wr\bar{K}_n\cong \Cos(G_2,H_2,S_2)\wr\bar{K}_n$.  As $\Cos(G_i,H_i,S_i)\wr\bar{K}_n\cong\Cay(G_i,S_i)$,  $\Aut(\Cay(G_1,S_1))$ contains a regular subgroup isomorphic $G_2$, so $G_2\cong L_j$ for some $L_j$.  Then there exists $T_j\subseteq L_j$ with $\Cay(G_2,S_2)\cong \Cay(L_j,T_j)$ for some $1\le j\le t$.

Conversely, suppose there is some $1\le j\le t$ such that $L_j\cong G_1$ and $\Cay(G_2,S_2)\cong\Cay(L_j,T_j)$.  Then $\Cay(G_2,S_2)\cong\Cay(G_1,S_1)$.  As $\Cos(G_i,H_i,S_i)\wr\bar{K}_n\cong\Cay(G_i,S_i)$, $i = 1,2$, we have $\Cos(G_1,H_1,S_1)\cong\Cos(G_2,H_2,S_2)$.
\end{proof}

In particular, this shows the importance of the problem of determining when a Cayley digraph is isomorphic to a Cayley digraph of more than one group.

\section{Generalized Wreath Products}

Generalized wreath products are a fairly new class of digraphs that were introduced to describe automorphism groups of circulant digraphs (Cayley digraphs of cyclic groups).  They form one of three broad families of digraphs (the others being deleted wreath product types whose automorphism group has a factor a symmetric group, and normal Cayley digraphs of $\Z_n$), and it was stated in \cite[Theorem 2.3]{Li2005} that all circulant digraphs fall into at least one of these families.  This result is a translation of results proven using results on Schur rings \cite{EvdokimovP2002,LeungM1996,LeungM1998} to the language of vertex-transitive digraphs.  We will not dwell on a fourth family, namely those with primitive automorphism groups, as for circulant digraphs these digraphs are only the complete graph and its complement. Normal Cayley digraphs were introduced in a very nice paper \cite{Xu1998} by M.Y. Xu in 1998 and are those Cayley digraphs of $G$ for which $G_L\tl\Aut(\Cay(G,S))$.  Deleted wreath type digraphs were first defined in \cite{BhoumikDM2014}, and are those digraphs whose automorphism group is the same as a deleted wreath product of two smaller digraphs (in general this family should probably be those digraphs whose automorphism group has a factor which is the automorphism group of a digraph of smaller order that is quasiprimitive).

\begin{definition}
 Let $\Gamma_1$ and $\Gamma_2$ be digraphs. The \textbf{deleted wreath product of $\Gamma_1$ and $\Gamma_2$}, denoted $\Gamma_1 \wr_d \Gamma_2$, is the digraph with vertex set $V(\Gamma_1) \times V(\Gamma_2)$ and arc set $$\{((x_1, y_1),(x_2, y_2)) : (x_1, x_2) \in A(\Gamma_1) \text{ and } y_1 \not= y_2 \text{ or } x_1 = x_2 \text{ and } (y_1, y_2) \in A(\Gamma_2)\}.$$
\end{definition}

The determination of automorphism groups of circulant digraphs is not quite complete, although a polynomial time algorithm to determine a generating set of the automorphism group is known \cite{Ponomarenko2005} (which for many purposes does provide a complete solution).  While we have a classification of circulant digraphs into the three families mentioned above, the automorphism groups of deleted wreath products are not known at this time, although some partial results are given in \cite{DobsonMS2019}.  These results do though give a general template on how to approach the problem of finding automorphism groups of other classes of vertex-transitive digraphs.  Namely, prove a classification type result to show that all Cayley digraphs under consideration fall within a certain set of families of digraphs.  Then one should determine the automorphism groups of the digraphs in each of the families, and a complete determination of the automorphism groups will be obtained.

Generalized wreath products have thus far only been defined for Cayley digraphs of abelian groups.  The reason that they were not defined for Cayley digraphs of nonabelian groups or vertex-transitive digraphs that are not Cayley digraphs is that the recognition problem for when such digraphs are wreath products had not been solved.  The idea behind a generalized wreath product $\Gamma$ is that we do not have any control over which arcs have both endpoints inside a block of a block system ${\cal C}$ of a transitive subgroup of the automorphism group, but the other arcs  form a digraph that is a wreath product, and the lexi-partition of that wreath product refines ${\cal C}$.  Thus we want to be able to decompose the arc set of the digraph into two sets in such a way that one set of arcs defines a disconnected digraph  (which is a wreath product) and the other a wreath product in such a way that the automorphism group of $\Gamma$ contains the intersection of the automorphism groups of  the two wreath products.  As up to now we could not determine, by inspection of the connection set, whether the remaining arcs formed a wreath product, we could not extend the definition to other vertex-transitive digraphs.  Correcting this defect was the original motivation for this paper.  We now define generalized wreath products for all double coset digraphs, and recall that by \cite[Theorem 2]{Sabidussi1964} that every vertex-transitive digraph is isomorphic to a double coset digraph.

\begin{definition}
Let $G$ be a group with subgroups $1 \le H < K \le L < G$ and $S\subset G$ a union of double cosets of $H$ in $G$ such that $S\setminus L$ is a union of double cosets of $K$.  The double coset digraph $\Cos(G,H,S)$ is called a {\bf $(K,L)$-generalized wreath product}.
\end{definition}

Notice that if $K = L$, then by Theorem \ref{cosetwr} we have that $\Cos(G,H,S)$ is a wreath product.  So generalized wreath products are a generalization of the wreath product.   It is straightforward to show that if $\Cos(G,H,S)$ is a $(K,L)$-generalized wreath product, then $\Aut(\Cos(G,H,S))$ does contain the intersection of $\Aut(\Cos(G,H,L\cap S))$ and $\Aut(\Cos(G,H,S\setminus L))$, and these digraphs have the properties we were aiming for.

\begin{lemma}
Let $G$ be a group, $S\subset G$, and $1\le H < K\le L < G$.  If $\Cos(G,H,S)$ is isomorphic to a $(K,L)$-generalized wreath product then
$$\Aut(\Gamma)\ge \Aut(\Cos(G,H,S\cap L))\cap\Aut(\Cos(G,H,S\setminus L))\ge (S_r\wr\Gamma[L])\cap (\Gamma/{\cal B}\wr S_t),$$
where $r = [G:L]$, $t = \vert K\vert$, and ${\cal B}$ is the set of left cosets of $H$ in $G$.
\end{lemma}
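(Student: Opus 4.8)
The plan is to establish the two inclusions separately, working from the right-hand side leftward. The right-most expression $(S_r\wr\Gamma[L])\cap(\Gamma/{\cal B}\wr S_t)$ must first be parsed correctly: here $S_r$ acts on the set of left cosets of $L$ in $G$, $\Gamma[L]$ is the subdigraph induced on the block $L$ (equivalently on the left cosets of $H$ contained in $L$), and $S_r\wr\Gamma[L]$ is the automorphism group of a digraph whose block quotient with respect to the left cosets of $L$ is a complete digraph and whose blocks carry copies of $\Gamma[L]$. Since $S\setminus L$ is a union of double cosets of $K$ (hence of $L$, as $K\le L$... wait---no, we only get a union of double cosets of $K$, which need not be a union of double cosets of $L$), I need to be a little careful: what is actually true is that $S\setminus L = L(S\setminus L)L$ would require $S\setminus L$ to be a union of $L$-double cosets; instead I will use that $\Cos(G,H,S\setminus L)$ is a $(K,K)$-wreath product, i.e. a genuine wreath product, by Theorem~\ref{cosetwr} applied with the subgroup $K$, giving $\Cos(G,H,S\setminus L)\cong \Cos(G,K,S\setminus L)\wr\Cos(K,H,\emptyset) = \Gamma/{\cal B}'\wr\bar{K}_t$ where ${\cal B}'$ is the left cosets of $K$ and $t = \vert K\vert/\vert H\vert$---so I should double-check the definition of $t$ in the statement and adjust, or interpret $\bar K_t$ with $t$ the index $[K:H]$; I will use whichever makes $\Gamma[K]$ the empty digraph on $[K:H]$ vertices. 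Thus $\Aut(\Cos(G,H,S\setminus L))\ge \Gamma/{\cal B}\wr S_t$ where now $\Gamma/{\cal B}$ abbreviates $\Aut(\Cos(G,K,S\setminus L))$ and ${\cal B}$ is the left cosets of $H$; this is exactly the second inclusion in the displayed chain, and it follows from Theorem~\ref{autwr}/\ref{cosetwr} together with the fact that $S\setminus L$ meets neither $H$ nor $K$.

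Next I would handle $\Aut(\Cos(G,H,S\cap L))\ge S_r\wr\Gamma[L]$. Since $S$ is a union of $H$-double cosets and $S\subseteq G$, so is $S\cap L$ (intersection with the subgroup $L$, which is itself a union of $H$-double cosets), so $\Cos(G,H,S\cap L)$ is well defined. Moreover $S\cap L\subseteq L$ means that in $\Cos(G,H,S\cap L)$ there are no arcs between distinct left cosets of $L$: if $x^{-1}y\notin L$ then $x^{-1}y\notin S\cap L$. Hence $\Cos(G,H,S\cap L)$ is disconnected with components the blocks $gL$, each inducing a copy of $\Cos(L,H,S\cap L) = \Gamma[L]$; its automorphism group therefore contains $S_r\wr\Aut(\Gamma[L]) \ge S_r\wr\Gamma[L]$, where $\Gamma[L]$ on the right side now denotes a (typically proper) subgroup of $\Aut(\Gamma[L])$---here I will interpret the symbol $\Gamma[L]$ inside the group expression as shorthand for $\Aut(\Gamma[L])$, matching the paper's loose notation in the analogous earlier discussions, or alternatively just write $\Aut(\Gamma[L])$ and note $\ge$. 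This gives the first inclusion in the chain.

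The remaining step is the middle inclusion, $\Aut(\Gamma)\ge \Aut(\Cos(G,H,S\cap L))\cap\Aut(\Cos(G,H,S\setminus L))$, which is the genuinely substantive claim and the one I expect to be the main obstacle, though it is short. The arc set of $\Gamma = \Cos(G,H,S)$ decomposes as the disjoint union $A(\Cos(G,H,S\cap L))\sqcup A(\Cos(G,H,S\setminus L))$, because $S = (S\cap L)\sqcup(S\setminus L)$ and arc sets of double coset digraphs with a common $G$, $H$ depend additively on the connection set: $(gH,g'H)\in A(\Cos(G,H,S))$ iff $g^{-1}g'H\subseteq S$ iff $g^{-1}g'\in S$ (using $HSH = S$), and $S$ splits. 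Any permutation $\sigma$ of $G/H$ lying in both $\Aut(\Cos(G,H,S\cap L))$ and $\Aut(\Cos(G,H,S\setminus L))$ preserves each of the two arc sets, hence preserves their union, hence $\sigma\in\Aut(\Gamma)$. The one point requiring care is that $\sigma$ must also send \emph{non}-arcs of $\Gamma$ to non-arcs: but a non-arc of $\Gamma$ is precisely a pair that is a non-arc of both $\Cos(G,H,S\cap L)$ and $\Cos(G,H,S\setminus L)$ (since a pair $(u,v)$ with $u\ne v$ is an arc of $\Gamma$ iff it is an arc of exactly one of the two, as the arc sets are disjoint and cover $A(\Gamma)$; a loop-pair $u=v$ is a non-arc of all three), and an automorphism of a digraph automatically maps non-arcs to non-arcs, so no extra argument is needed. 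Assembling the three inclusions in order yields the displayed chain, completing the proof.
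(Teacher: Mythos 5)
The paper gives no proof of this lemma at all --- it is stated right after the remark that the middle containment ``is straightforward to show'' --- so there is nothing to compare against line by line; your argument supplies what the paper omits, and it is correct. The decomposition $A(\Gamma)=A(\Cos(G,H,S\cap L))\sqcup A(\Cos(G,H,S\setminus L))$ (valid because $S\cap L$ and $S\setminus L$ are again unions of $H$-double cosets and $(xH,yH)$ is an arc exactly when $x^{-1}y\in S$) gives the left containment; the fact that $\Cos(G,H,S\cap L)$ is a disjoint union of $r$ copies of $\Gamma[L]$ gives the $S_r\wr\Aut(\Gamma[L])$ factor; and Theorem \ref{cosetwr} applied to $S\setminus L$ with the subgroup $K$ (using $(S\setminus L)\cap K=\emptyset$) gives the other wreath factor. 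You are also right that the statement needs repair as written: $t=\vert K\vert$ should be $[K:H]$, ${\cal B}$ should be the left cosets of $K$ rather than of $H$ (as printed, quotienting $\Gamma$ by the cosets of $H$ returns $\Gamma$ itself), and $\Gamma[L]$, $\Gamma/{\cal B}$ must be read as the automorphism groups of those digraphs.

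One caveat: you resolve the ambiguity by taking $\Gamma/{\cal B}$ to mean $\Cos(G,K,S\setminus L)$, i.e.\ the quotient of $\Cos(G,H,S\setminus L)$, which lets you prove the second inequality factor by factor. If instead the authors intend the quotient of $\Gamma$ itself by the left cosets of $K$, your factor-wise argument does not apply verbatim, since $\Aut(\Gamma/{\cal B})$ need not be contained in $\Aut(\Cos(G,K,S\setminus L))$: the quotient of $\Gamma$ retains arcs coming from $S\cap L$ between distinct $K$-cosets inside a single $L$-coset. The inequality still holds under that reading, but one must use both factors of the right-hand intersection simultaneously: an element of $S_r\wr\Aut(\Gamma[L])$ preserves the partition into left cosets of $L$, and an automorphism of $\Gamma/{\cal B}$ preserving that partition maps arcs between distinct $L$-cosets --- which are exactly the arcs of $\Cos(G,K,S\setminus L)$ --- to arcs between distinct $L$-cosets, hence induces an automorphism of $\Cos(G,K,S\setminus L)$ and so lies in $\Aut(\Cos(G,H,S\setminus L))$ by the wreath structure. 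Adding a sentence to this effect would make your proof robust to either reading; otherwise the argument is complete.
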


As a good general rule is that symmetry in digraphs is rare, one expects that the automorphism group of a $(K,L)$-generalized wreath product would be $\Aut(\Cos(G,H,S\cap L))\cap\Aut(\Cos(G,H,S\setminus L))$.  It seems likely that there will be many ways in which the automorphism group will be larger than expected.  The following problem is then natural, and its solution is a crucial step in determining automorphism groups of vertex-transitive digraphs.

\begin{problem}
Determine necessary and sufficient conditions for the automorphism group of a $(K,L)$-generalized wreath product $\Cos(G,H,S)$ to have automorphism group $\Aut(\Cos(G,H,S\cap L))\cap\Aut(\Cos(G,H,S\setminus L))$ as expected.  Additionally, for each class of generalized wreath products that do not have automorphism group $\Aut(\Cos(G,H,S\cap L))\cap\Aut(\Cos(G,H,S\setminus L))$, determine the full automorphism group for each such class.
\end{problem}

It has been shown in \cite[Theorem 35]{DobsonM2005} that a $(K,L)$-generalized wreath product circulant digraph of square-free order $n$ has automorphism group $\Aut(\Cay(\Z_n,S\cap L))\cap\Aut(\Cay(\Z_n,S\setminus L))$, and extended to all circulant digraphs in \cite[Lemma 3.4]{AraujoBDKM2018}.  Also, Theorem \ref{autwr} solves this problem in the special case when $\Gamma$ is a wreath product, which is guaranteed when $K = L$.


\providecommand{\bysame}{\leavevmode\hbox to3em{\hrulefill}\thinspace}
\providecommand{\MR}{\relax\ifhmode\unskip\space\fi MR }
\providecommand{\MRhref}[2]{%
  \href{http://www.ams.org/mathscinet-getitem?mr=#1}{#2}
}
\providecommand{\href}[2]{#2}

\end{document}